\newtheorem{theorem}{Theorem}[section]
\newtheorem{proposition}[theorem]{Proposition}
\newtheorem{definition}[theorem]{Definition}
\newtheorem{conjecture}[theorem]{Conjecture}
\newtheorem{remark}[theorem]{Remark}
\newcommand\finbox{~\hfill$\Box$}%
\newenvironment{proof}[1][Proof]{\noindent\textbf{#1. }{}}{}%
\def\al {{\alpha}}
\def\be {{\beta}}
\def\la {{\lambda}}
\def \rz { {\mathbb R}}
\def\Af {{\mathfrak A}}
\def\af {{\frak a}}
\def\mf {{\frak m}}
\def \nz {{\mathbb N}}
\def \rz {{\mathbb R}}
\def \rz {{\mathbb R}}
\numberwithin{equation}{section}
\newcommand \clb{\color{blue}}
\newcommand \clg{\color{green}}
\begin{document}
%\null\vspace{4cm}

{\centering
\bfseries
{\Large {On maximal multiplicities for Hamiltonians with separable variables}
\\
\noindent
\par
\mdseries
\scshape
\small
B. Helffer$^{*,**}$\\
T. Hoffmann-Ostenhof$^{***}$ \\
P. Marquetand$^{***}$\\
\par
\upshape
Laboratoire de Math\'ematique Jean Leray, Univ. Nantes $^*$\\
Laboratoire de Math\'ematiques d'Orsay, Univ Paris-Sud and CNRS $^{**}$\\
Institut f\"ur Theoretische Chemie, Universit\"at Wien $^{***}$\\~\\
}
}

\begin{abstract}
For $N\in \nz^*:=\mathbb N \setminus \{0\}$, we consider the collection $\mathfrak M(N)$    of all the  $N$ rows, for which, for $n=1,\cdots,N$,  the $n-th$ row consists of an increasing 
 sequence $(a_j^n)_j$  of  real numbers.  For $\Af \in \mathfrak M(N)$, we define its  spectrum  $\sigma(\Af)$ 
by $\sigma(\Af)=\{\la\in \mathbb R \;|\; \la=\sum_{n=1}^Na_{j_n}^n\}\,,$ 
where $(j_1,j_2,\dots,j_N)\in (\nz^*)^N$. This spectrum is discrete and consists of  an infinite sequence that can be ordered 
as a strictly  increasing sequence $\lambda_k(\Af)$. For $\lambda \in \sigma (\Af)$ we denote by $m(\lambda,\Af) $ the number of representations of such a $\la$, hence the multiplicity 
of $\la$.\\  In this paper we investigate for  given $N\in \mathbb N^*$ and $k\in \nz^*$ the highest possible multiplicity (denoted by  $\mf_k(N)$) of $\la_k(\Af)$ for $\Af \in  \mathfrak M(N)$. We give the exact result for $N=2$ and  for $N=3$  prove a lower bound which appears, according to numerical experiments, as  a "good" conjecture. For the general case, we give examples demonstrating that the problem is quite difficult. \\
This  problem  is equivalent to the analogue eigenvalue multiplicity questions for Schr\"odinger operators 
describing a system of N non-interacting one-dimensional particles. 

\end{abstract}
\newpage 
\section{The general problem}
 The  motivation for this paper  is 
the spectral problem  for the operator in $\mathbb R^N$  ($N\in \mathbb N^*:=\mathbb N \setminus \{0\}$).
$$
H_N:= \mathfrak h_1 \otimes I \otimes\cdots \otimes I + I\otimes  \mathfrak h_2 \otimes I \otimes\cdots \otimes I + \cdots +  I \otimes I \otimes\cdots \otimes \mathfrak h_N\,,
$$
see \cite{ReSi} for the notation where we identify $L^2(\mathbb R^N)$ and $ L^2(\mathbb R)\otimes \cdots\otimes L^2(\mathbb R)$.\\
In other words, we consider
\begin{equation}
H_N=\sum_{i=1}^N \mathfrak h_i (x_j,\partial_{x_j})\,,
\end{equation}
where, for each $i$, $$\mathfrak h_i (t,\frac{d}{dt}) = - \frac{d^2}{dt^2} + v_i(t) $$ is a $(1D)$- operator has discrete spectrum: $\sigma(H_i)=\{a^i_k\}_{k=1,2,3,\dots, s,s+1,\dots}$. Note that the sequence $\{k\}$ could be a priori  finite or infinite but we will mainly discuss the infinite case.

 Some classical result  says  that given a finite sequence of numbers,
say $\la_1<\la_2,\dots  <\la_K$  then there is a potential $v$ such that $-\frac{d^2}{dt^2} +v$ in $(1D)$ has those
$\la_i$ as the first $K$ eigenvalues. $K$ is arbitrary, but finite. This can be extended to the case of a bounded countable sequence of eigenvalues (see \cite{GKZ} and references therein).\\
Another classical result (1987) is due to  Y. Colin de Verdi\`ere (see \cite{CdV}). He proves that, in dimension $n\geq 3$, for a given $p$ and a finite sequence $0 < \lambda_2 \leq \cdots \leq \lambda_p\,$, 
 there exists $(M,g)$ smooth with first $p$ eigenvalues of $\Delta$ equal $0,\lambda_2,\dots, \lambda_p\,$. \\
 
Let us describe our problem. 
Let $N\geq 1$ be an integer and consider $N$ sequences $\af^n:=\{a^n_i\}_{i=1}^\infty$ ($n=1,\dots, N$)
and write these $\af^n$ as rows in a matrix $\Af $,
\begin{equation}\label{AfN}
\Af =\left(
\begin{matrix}
a_1^1&a^1_2&a^1_3&\dots& a^1_s&a^1_{s+1}&\dots \\
a^2_1&a^2_2&a^2_3&\dots &a^2_s&a^2_{s+1}&\dots \\
\dots&\dots&\dots&\dots&\dots&\dots&\dots\\
\dots&\dots&\dots&\dots&\dots&\dots&\dots\\
a^{N-1}_1&a^{N-1}_2&a_3^{N-1}&\dots& a_s^{N-1}&a_{s+1}^{N-1}& \dots \\
a^N_1&a^N_2&a^N_3&\dots&a^N_s&a^N_{s+1}&\dots 
\end{matrix}
\right.\,,
\end{equation}
where we assume that the $a_i^n$ are real-valued and satisfy for each $n$ 
\begin{equation}\label{ineq}
a_i^n<a_{i+1}^n\,. 
\end{equation}
We define the spectrum of $\Af $ by 
\begin{equation}\label{sigmaN}
 \sigma(\Af )=\{\la \in \rz\:|\: \la =\sum_{n=1}^Na^n_{j_n}\}\,,
\end{equation}
where  $(j_1,j_2,\dots, j_N)\in (\nz^*)^N$.\\
 In other words any sum of N elements which individually belong to different rows in \eqref{AfN}
is in $\sigma(\Af )$. This spectrum is discrete and consists of an infinite sequence that we can order as a  non-decreasing sequence $\lambda_i$ tending to $+\infty\,$. 
We call the number of representations (which is finite)  of such a $\la$, $m(\la)$ and count the eigenvalues with multiplicity.
For  $N\ge 2$ multiplicities can indeed occur and we are mainly interested in the analysis of these multiplicities.

\begin{definition}\label{mm0}
For $N \in \mathbb N^*$, let $\mathfrak M(N)$ the family of the $\Af $'s defined by \eqref{AfN}-\eqref{ineq}. 
For each $\Af \in \mathfrak M(N)$ and $k\in \mathbb N^*$,  we introduce
\begin{equation}
m(k,\Af)= m(\lambda_k(\Af))\,,
\end{equation}
where $\lambda_k(\Af)$ is the $k$-th eigenvalue of $\Af$, and consider 
\begin{equation}\label{eq:1.7}
 \mf_k(N)=\sup_{\Af \in \mathfrak M(N)}\Big\{m(k,\Af) \Big\} \in \mathbb N^* \cup \{+\infty\}\,.
\end{equation}
\end{definition}
For fixed $N$ and $k$,  we 
want to find bounds to $\mf_k(N)$. Note that
other minimization problems relative to the harmonic oscillator (mainly in (2D)) are for example considered in \cite{Lar}.\\

The first proposition  is the following.
\begin{proposition}\label{Conj1}
For any $k$ and any $N\in \mathbb N^*$,  $\mf_k (N)$ is finite. More precisely, we have the bound:
$$
\mf_k (N) \leq k^{N-1}\,.
$$
\end{proposition}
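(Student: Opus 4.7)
The plan is to proceed by induction on $N$. For $N=1$, the row $\af^1$ is strictly increasing, so each eigenvalue of $\Af$ admits exactly one representation and $m(\lambda_k,\Af)=1=k^{0}$; this settles the base case.

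For the inductive step, fix $N\ge 2$ and $\Af\in\mathfrak M(N)$, and let $\Af'\in\mathfrak M(N-1)$ be the matrix obtained from $\Af$ by erasing its first row. A tuple $(j_1,\dots,j_N)$ represents $\lambda_k(\Af)$ iff $\mu:=\sum_{n=2}^{N}a^n_{j_n}\in\sigma(\Af')$ equals $\lambda_k(\Af)-a^1_{j_1}$. Grouping the representations by the value of $j_1$ gives the identity
\[
m(\lambda_k(\Af),\Af)\;=\;\sum_{j_1\in J}m\bigl(\lambda_k(\Af)-a^1_{j_1},\,\Af'\bigr),
\]
where $J:=\{\,j_1\in\nz^*:\lambda_k(\Af)-a^1_{j_1}\in\sigma(\Af')\,\}$.

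The whole argument then rests on a single order-preserving injection $T:\sigma(\Af')\to\sigma(\Af)$ defined by $T(\nu)=\nu+a^1_1$, which is well-defined because any admissible sum from rows $2,\dots,N$ becomes one for $\Af$ after adjoining $a^1_1$. Setting $\mu_{j_1}:=\lambda_k(\Af)-a^1_{j_1}$ for $j_1\in J$, one has $T(\mu_{j_1})=\lambda_k(\Af)-(a^1_{j_1}-a^1_1)\le\lambda_k(\Af)$ since $a^1_{j_1}\ge a^1_1$; because $j_1\mapsto T(\mu_{j_1})$ is injective, this forces $|J|\le k$, the number of distinct eigenvalues of $\Af$ not exceeding $\lambda_k(\Af)$. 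Next, for each fixed $j_1\in J$, write $\mu_{j_1}=\lambda_{k(j_1)}(\Af')$; applying the same $T$ to the first $k(j_1)$ eigenvalues of $\Af'$ produces $k(j_1)$ distinct eigenvalues of $\Af$, all $\le T(\mu_{j_1})\le\lambda_k(\Af)$, whence $k(j_1)\le k$. The induction hypothesis then yields $m(\mu_{j_1},\Af')\le k(j_1)^{N-2}\le k^{N-2}$.

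Combining the two bounds gives $m(\lambda_k(\Af),\Af)\le k\cdot k^{N-2}=k^{N-1}$, as required. I expect the only mildly subtle step to be arranging matters so that a \emph{single} injection simultaneously controls $|J|$ and each $k(j_1)$; anchoring the shift to the minimal index $a^1_1$ of the erased row is what makes both bounds fall out of the same geometric picture.
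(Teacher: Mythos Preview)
Your proof is correct and takes a genuinely different route from the paper's.

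The paper argues directly, with no induction: after normalizing so that $a^n_1=0$ for every $n$, each entry $a^n_i$ lies in $\sigma(\Af)$ (take the other $N-1$ summands to be zero), so the $k$ distinct values $a^n_1<\cdots<a^n_k$ witness $\lambda_k(\Af)\le a^n_k$ for every row $n$. Hence any representation $\lambda_k=\sum_n a^n_{\ell_n}$ forces $\ell_n\le k$ for all $n$; since the last index is determined by the first $N-1$, there are at most $k^{N-1}$ representations.

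Your argument instead peels off one row and inducts: the shift $T(\nu)=\nu+a^1_1$ injects $\sigma(\Af')$ into $\sigma(\Af)$ in an order-preserving way, and you use this single map to bound both the number of admissible first indices ($|J|\le k$) and the position $k(j_1)$ of each residual eigenvalue in $\Af'$. One small point worth tightening: when you write ``applying $T$ to the first $k(j_1)$ eigenvalues of $\Af'$ produces $k(j_1)$ distinct eigenvalues of $\Af$'', this is literally true only under the strictly-increasing labelling of the abstract; under the body's repeated-with-multiplicity convention the cleanest fix is to take $k(j_1)$ minimal and count \emph{representations} (the map $(j_2,\dots,j_N)\mapsto(1,j_2,\dots,j_N)$ shows that the number of representations of $\Af'$ with value $<\mu_{j_1}$ is at most the number of representations of $\Af$ with value $<\lambda_k(\Af)$, which is $\le k-1$). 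Either reading yields $k(j_1)\le k$, and the induction closes.

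The paper's approach is shorter and entirely elementary; yours is more structural and has the advantage that the same inductive scheme would immediately propagate any improved bound for $\mf_k(N-1)$ to a bound for $\mf_k(N)$.
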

It follows from this boundedness that we have maximizers for a given $\mf_k(N)$.

The next question is to ask for the monotonicity with respect to $k$ of $\mf_k(N)$.\\

We have the following property:
\begin{theorem}\label{thmmon}
For any $k \in \mathbb N^*$ and $N \in \mathbb N^*$, we have
\begin{equation}\label{monoton}
 \mf _k(N) \leq  \mf _{k+1}(N)\,.
\end{equation}
\end{theorem}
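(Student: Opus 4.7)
The plan is to prove $\mf_k(N) \leq \mf_{k+1}(N)$ by showing that for every $\Af \in \mathfrak M(N)$ there exists $\Af' \in \mathfrak M(N)$ with $m(k+1,\Af') \geq m(k,\Af)$. Taking the supremum over $\Af$ on the left then yields the inequality.

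The argument splits into two cases depending on whether $\lambda_k(\Af) = \lambda_{k+1}(\Af)$. If equality holds, the multiplicity of this common value is the same whether computed at position $k$ or $k+1$, so $m(k+1,\Af) = m(k,\Af)$ and the choice $\Af' = \Af$ suffices. So I assume $\lambda^* := \lambda_k(\Af) < \lambda_{k+1}(\Af)$ and set $M := m(k,\Af)$; then $\lambda^*$ occupies exactly positions $k-M+1,\ldots,k$ in the ordered spectrum.

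The main construction is a surgery on the first row of $\Af$. Let $\Bg$ denote the (multi-)spectrum generated by rows $2,\ldots,N$, namely the multiset of values $\sum_{n=2}^N a^n_{j_n}$, and list its distinct values as $c_0 < c_1 < c_2 < \ldots$. A crucial observation is that $c_0 = \sum_{n=2}^N a^n_1$ is the unique minimum, hence has multiplicity $1$ in $\Bg$. Form $\Af'$ from $\Af$ by inserting one additional real number $b$ into the first row and reordering to preserve strict increase. I choose $b$ in the open interval $(\lambda^* - c_1,\, \lambda^* - c_0)$, which is non-empty since $c_1 > c_0$; a generic such $b$ further ensures $b + c \neq \lambda^*$ for every $c \in \Bg$ and $b \neq a^1_j$ for every $j$. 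This interval meets some gap $(a^1_j,\, a^1_{j+1})$ of row $1$ (with the convention $a^1_0 := -\infty$): take $j$ maximal with $a^1_j < \lambda^* - c_0$, and note $a^1_{j+1} \geq \lambda^* - c_0 > \lambda^* - c_1$.

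The effect of this surgery is that $\sigma(\Af') = \sigma(\Af) \cup \{b + c : c \in \Bg\}$ as multisets (disjoint by genericity), and among the new eigenvalues only $b + c_0$ is $\leq \lambda^*$: it lies strictly below and carries multiplicity $1$. All other new eigenvalues lie strictly above $\lambda^*$. Hence $\lambda^*$ now sits at positions $k - M + 2,\ldots,k+1$ in $\sigma(\Af')$ with unchanged multiplicity $M$, giving $m(k+1,\Af') = M = m(k,\Af)$. The single delicate point of the proof is the guarantee that exactly one new eigenvalue lands on the lower side of $\lambda^*$, and this rests entirely on the simplicity of the ground value $c_0$ in $\Bg$; without this non-degeneracy, several coincident new eigenvalues would enter simultaneously and the position count would overshoot $k+1$.
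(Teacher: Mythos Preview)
Your proof is correct (with the trivial caveat that for $N=1$ the multiset $\Bg$ has only the single value $c_0$, so there is no $c_1$; but then every multiplicity is $1$ and the statement is immediate). Note also that your genericity clause ``$b+c\neq\lambda^*$'' is in fact automatic from the choice of interval, since $b+c_0<\lambda^*$ and $b+c\geq b+c_1>\lambda^*$ for all other $c$; the only genuine condition on $b$ is $b\notin\{a^1_j\}$.

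Your route, however, is genuinely different from the paper's. The paper works with a \emph{maximizer} $\Af_k$ of $\mf_k(N)$ and, in the hard case where the block $\lambda_{k-\mf_k+1}=\cdots=\lambda_k$ does not extend to position $k+1$, analyzes the representations $\lambda_k=\sum_n a^n_{j_n}$: it selects one with a maximal positive entry $a^i_{j_i}$, counts how many representations share that entry ($\nu_k$), and then perturbs that single entry down by $\epsilon$ while reinserting the original value one slot to the right. This splits off a new eigenvalue of multiplicity $\nu_k$ just below $\lambda_k$ while preserving the full multiplicity $\mf_k$ at $\lambda_k$, pushing the block up to cover position $k+1$. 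Your construction instead \emph{inserts} a fresh element into the first row so that exactly one new eigenvalue $b+c_0$ lands below $\lambda^*$; the only structural fact you use is the simplicity of the ground value $c_0=\sum_{n\geq 2}a^n_1$ in the partial spectrum $\Bg$, which is automatic. This is conceptually cleaner---it avoids any analysis of the representations of $\lambda_k$, does not need the existence of a maximizer, and works uniformly for every $\Af$ rather than only at the supremum. The paper's approach, on the other hand, stays closer to the perturbative philosophy used later in Section~\ref{s7} (deleting and shifting entries), so it meshes better with the constructions for the lower bounds there.
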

Note that we cannot hope to have always a strict inequality in \eqref{monoton} (see below the results for $N=3$).\\

As an intermediate result, it could be useful to validate the following conjecture:
\begin{conjecture}
For determining  $\mf_k(N)$, it suffices to take the supremum in \eqref{eq:1.7} over the  $\mathfrak A$  with the $a_i^j\in \mathbb Z$.
\end{conjecture}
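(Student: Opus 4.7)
The plan is to reduce the conjecture to a finite-dimensional linear-algebra problem and then exploit the density of $\qz$ in $\rz$. By Proposition \ref{Conj1}, $\mf_k(N)$ is a finite positive integer, so the supremum in \eqref{eq:1.7} is actually attained at some maximizer $\Af^\star\in\mathfrak M(N)$. Since each row of $\Af^\star$ is strictly increasing, only finitely many entries $a^n_i$ with $i\leq K$ (for some $K$ depending on $k$ and $\Af^\star$) can contribute to any of the first $k$ eigenvalues; the remaining entries will be chosen later as sufficiently large integers and play no role in what follows. So we are really dealing with a matrix whose relevant part lives in $\rz^d$, $d=NK$.

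Write $m:=\mf_k(N)=m(k,\Af^\star)$. The condition that $\lambda_k(\Af^\star)$ has multiplicity $m$ is encoded by two kinds of constraints on $\Af^\star$. First, there are $m-1$ \emph{homogeneous linear equations} with coefficients in $\{-1,0,1\}$ expressing that the $m$ representations of $\lambda_k(\Af^\star)$ produce the same sum. Second, there are finitely many \emph{strict inequalities} (the row orderings $a^n_i<a^n_{i+1}$, together with the strict comparisons between $\lambda_k(\Af^\star)$ and all partial sums that do not belong to the $m$ chosen representations) encoding that $\lambda_k(\Af^\star)$ is precisely the $k$-th eigenvalue counted with multiplicity. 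Let $V\subset\rz^d$ denote the linear subspace defined by the first family of equations. Since $V$ is cut out by equations with integer coefficients, $V\cap\qz^d$ is dense in $V$, and I can therefore choose a rational matrix $\Af^{(\ell)}\in V\cap\qz^d$ arbitrarily close to $\Af^\star$. The strict inequalities are open conditions, so they remain valid for $\ell$ sufficiently large; in particular $m(k,\Af^{(\ell)})\geq m$, hence $=m$.

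Finally, multiplying $\Af^{(\ell)}$ by the common denominator $D$ of its (finitely many) relevant entries is a dilation by a positive scalar, which preserves every linear equality and every strict inequality, hence preserves both the ordering of all partial sums and the multiplicity of each eigenvalue. This yields a finite integer matrix with multiplicity $m$ at index $k$; appending sufficiently large integers to each row extends it to an element of $\mathfrak M(N)$ with all entries in $\zz$, still satisfying $m(k,\cdot)=m$, which proves the conjecture. The point requiring the most care is the preservation of the index $k$ under the rational approximation: this follows from the invariance of the defining equations of $V$ (guaranteeing that the $m$ special sums remain equal) combined with the openness of the strict inequalities (preventing any non-special sum from crossing $\lambda_k$), so that the number of sums lying strictly below $\lambda_k(\Af^{(\ell)})$ coincides with that for $\Af^\star$.
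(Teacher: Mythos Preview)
The paper does not prove this statement --- it is explicitly labelled a Conjecture (immediately after Theorem~\ref{thmmon}) and left open, with only the remark that its validity would justify the computer searches over integer matrices. There is therefore no proof in the paper to compare against.

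Your argument is correct and in fact settles the conjecture. All the pieces are in place: the supremum is attained because $\{m(k,\Af):\Af\in\mathfrak M(N)\}$ is a bounded set of positive integers; after normalizing the first column to zero (Section~2), the proof of Proposition~\ref{Conj1} shows that $K=k$ suffices, so only finitely many entries are relevant; the $m-1$ equalities linking the representations of $\lambda_k(\Af^\star)$ have coefficients in $\{-1,0,1\}$, so $V$ is a rational linear subspace and $V\cap\qz^d$ is dense in $V$; every remaining constraint (the row orderings $a^n_i<a^n_{i+1}$, and $S(T)\lessgtr S(T_1)$ for each non-special tuple $T$ with indices $\le K$) is a strict linear inequality and hence open; and positive scaling followed by extension with sufficiently large integers preserves all of this. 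The one point you correctly flag as delicate --- that the index $k$ is preserved under the rational perturbation --- is exactly what follows from the stability of the sign of $S(T)-S(T_1)$ for every non-special $T$: both the number of tuples strictly below $\lambda_k$ and the number equal to $\lambda_k$ are unchanged, so $\lambda_k$ remains the $k$-th eigenvalue with multiplicity $\mf_k(N)$.
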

This will indeed   permit to use a computer for determining $\mf_k (N)$ by looking at a finite number of possibilities. 
But the size of the computations could increase dramatically with $k$ and $N$. Actually, all the numerical results we will be referring to in this paper
are done by computation involving only integers. \\

  We do not give in this introduction a general conjecture. Our initial  guess was  that the matrix associated with the  radial harmonic oscillator was a maximizer for any $k$. Although this remark is quite 
  useful for getting a first lower bound for $\mf_k(N)$, we will show that starting from  $N=3$ this lower bound is not optimal and can be improved. \\
  The goal of this paper is to propose better lower bounds with the final hope to get or at least guess the optimal result.

 \section{Reductions}
 
For  some of the proofs it is useful to perform some normalization without changing the 
multiplicities. If for a given $\Af$, we construct a new $\widehat \Af$ by replacing 
 each element  $a_i^n\in\af^n$ by $a_i^n -a_1^n$ so that the first column in 
$\Af$ is a zero-vector:
\begin{equation}\label{firstc}
a_1^n =0\,,\, \mbox{ for } n=1,\dots, N\,,
\end{equation}
we have, since we just shift the spectrum by $-\sum_{i=1}^N a_1^i\,$, $$ m (k,\Af)=m(k,\widehat \Af)\mbox{ for any }k\in \mathbb N^*\,.$$

Without changing the multiplicities we can also interchange  the rows $\af^n$ and can also for $c>0$ consider $c\, \Af\,$. 
Hence 
we can assume after these operations that   
\begin{equation}\label{a2=1}     
a_2^1=1\le a_2^2 \leq  \dots \leq a_2^N\,.
\end{equation}
\begin{proposition}
For the determination of $\mf(k,N)$, it suffices to consider the supremum  over the family $\widehat{\mathfrak M}(N)$ of  the matrices $\Af$ of the form
\begin{equation}\label{Af0}
 \Af = \left(
\begin{matrix}
0&1 &a_3^1&\dots&a_s^1&\dots\\
0&a_2^2&a_3^2&\dots&a_s^2&\dots\\
\hdotsfor{6}\\
0&a_2^N&a_3^N&\dots&a_s^N&\dots
\end{matrix}
\right.
\end{equation}
\end{proposition}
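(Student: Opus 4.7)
The plan is to show that each of the three normalizations mentioned in the text preserves both the ordering of the spectrum and the multiplicity function $\lambda \mapsto m(\lambda,\Af)$, so that $m(k,\Af)$ is unchanged under all of them. We then compose them to reduce an arbitrary $\Af \in \mathfrak M(N)$ to one in $\widehat{\mathfrak M}(N)$.

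First, I would treat the column translation. Given $\Af\in \mathfrak M(N)$, form $\widehat\Af$ by replacing $a_i^n$ with $\widehat a_i^n:=a_i^n-a_1^n$. Every element $\la=\sum_n a_{j_n}^n$ of $\sigma(\Af)$ is mapped bijectively to $\widehat\la=\la-\sum_n a_1^n$ in $\sigma(\widehat\Af)$, and the set of representations is in bijection, so $m(\la,\Af)=m(\widehat\la,\widehat\Af)$. Since the map is an affine shift, it preserves the strict order, hence the $k$-th eigenvalue is sent to the $k$-th eigenvalue. This achieves \eqref{firstc}.

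Second, I would permute the rows. If $\sigma$ is a permutation of $\{1,\dots,N\}$ and $\Af^\sigma$ is the matrix whose $n$-th row is the $\sigma(n)$-th row of $\Af$, then $\sum_n a_{j_n}^{\sigma(n)} = \sum_n a_{j_{\sigma^{-1}(n)}}^n$, so the sets $\sigma(\Af)$ and $\sigma(\Af^\sigma)$ coincide and the families of representations correspond bijectively. Once \eqref{firstc} holds, each $a_2^n>0$, and choosing $\sigma$ so that $a_2^{\sigma(1)}\le a_2^{\sigma(2)}\le \dots \le a_2^{\sigma(N)}$ gives an order of the second column that is non-decreasing. Finally, I would apply the homothety $\Af\mapsto c\,\Af$ with $c:=1/a_2^{\sigma(1)}>0$. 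This scales every element of the spectrum by the same positive factor, hence preserves order and preserves the representations and their number, so multiplicities and the $k$-th eigenvalue position are preserved. This normalizes the first nonzero entry of the second column to $1$, yielding \eqref{a2=1} and the form \eqref{Af0}.

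Since each of the three operations fixes $m(k,\cdot)$ and maps $\mathfrak M(N)$ into itself, the supremum in \eqref{eq:1.7} restricted to $\widehat{\mathfrak M}(N)$ equals $\mf_k(N)$. I do not anticipate a genuine obstacle here; the only point requiring care is checking that the order of the spectrum is preserved so that the index $k$ is unambiguous throughout, which follows from the fact that the three maps act on $\sigma(\Af)$ by a common affine transformation $\la\mapsto c(\la-\sum_n a_1^n)$ with $c>0$.
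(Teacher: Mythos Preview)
Your proof is correct and follows essentially the same approach as the paper: the paper's argument is the discussion immediately preceding the proposition, which invokes exactly the three multiplicity-preserving operations (column translation, row permutation, positive scaling) that you spell out. Your version is simply a more explicit rendering of that discussion, with the added remark that order is preserved so the index $k$ is well-defined throughout.
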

We have under these additional assumptions  the following 
\begin{equation}\label{la1la2}
 \la_1=0\text{ and  } \la_2=1\,.
\end{equation}
We also see that $m(\la_2)\le N$ with equality if and only if $a_2^j=1$ for all $1\leq j \leq N$.

 \section{Proof of the  finiteness of $\mf_k$.}
 
For given $N$,  consider the matrix 
\begin{equation}\label{k0}
 \Af =
\begin{pmatrix}
0&a^1_2&\dots &a^1_k&\dots\\
0&a^2_2&\dots &a^2_k&\dots \\
\dots&\dots&\dots&\dots&\dots\\
0&a_2^N&\dots &a_k^N&\dots 
\end{pmatrix}
.
\end{equation}
As before we have $a_i^j<a_{i+1}^j$.  All the elements of $\Af$ are separately in $\sigma(\Af)$
since obviously $a_i^j$ belongs to the N-tuple where all the other members of this N-tuple are set zero. \\
From this, we conclude that, for any $k\geq 1$, 
\begin{equation}\label{eq:3.2}
\la_k(\Af)\le \inf_{1\le n\le N}a_k^n\,.
\end{equation}
Now if $\lambda_k(\Af) = \sum_{j=1}^N a^j_{\ell_j}$, we deduce from \eqref{eq:3.2}  that $\ell_j \leq k$, for $j=1,\dots,N$.\\
Hence the multiplicity of $\lambda_k$ is less than the cardinality of the eigenvalues for which $\ell_j \leq k$ for $j=1,\dots, N$.
This gives
\begin{equation}
m(k,  \Af) \leq k^N\,.
\end{equation}
If we now observe that once the elements of the first $(N-1)$ rows are chosen for getting $\lambda_k$, the element of the last row is determined, we get actually the better bound $k^{N-1}$, as stated in Proposition~\ref{Conj1}\,.

 \section{Monotonicity--Proof of Theorem \ref{thmmon}}

The theorem is clear for $N=1$. We now assume $N>1$.
If $\Af_k$ is a maximizer of $\mf_k(N)$ with $\lambda_{k-1}(\Af_k) < \lambda_{k}(\Af_k)$,  it is immediate, noting that $\mf_k(N)\geq 2\,$, that 
$\lambda_{k+1} (\Af_k)=\lambda_{k}  (\Af_k)$.  Hence  we get  $\mf_k(N) \leq \mf_{k+1}(N)$. \\
The argument can be generalized to the case when 
 $$
 \lambda_{\ell -1}(\Af_k) < \lambda_\ell (\Af_k) =\lambda_{\ell +1}(\Af_k) =\cdots = \lambda_{k}(\Af_k) \mbox{  with } \ell > k - \mf_k(N) +1\,.
 $$
We are in trouble if we are
 in the situation where the maximizer in $\widehat{\mathfrak M}(N)$  satisfies,  with  $\ell =k - \mf_k(N) +1$,  $$\lambda_{\ell -1}(\Af_k) < \lambda_\ell (\Af_k) =\lambda_{\ell +1}(\Af_k) =\cdots = \lambda_{k}(\Af_k)< \lambda_{k+1}(\Af_k) \,.
 $$
 Let $\lambda_k$ the $k$-th eigenvalue asssociated with $\Af_k $. We are only interested in the proof for $k\geq 2$ and we can consequently assume that $\lambda_k\geq 1\,$. 
 Let us look at all the $N$-tuples leading to $\lambda_k$. Because the multiplicity is $\geq 2\,$, we can find for each $N$-tuple   $i>1$ such that
 $$ 
 \lambda_k =\sum_{n=1}^N a_{j_n} ^n\,,
 $$
 with $a_{j_i}^i>0\,$.\\
 Between all these sums, we can also assume in addition that $a_{j_i}^i$ is maximal.\\
 We now denote by $\nu_k$ the multiplicity of the elements whose sum is $\lambda_k$ with $a_{j_i}^i$ on row  $i$.\\
 It is clear that $1\leq \nu_k \leq \mf_k\,$.\\
 We now look at the row $i$  and modify $a_{j_i}$ into $a_{j_i}-\epsilon$ ($\epsilon >0$) in such a way that any sum involving $a_{j_i}-\epsilon$ is higher that $\lambda_{k-\mf_k}$. This can easily be done by the condition 
 $$
 a_{j_i-1} < a_{j_i}-\epsilon< a_{j_i}\,.
 $$
 Then we shift $a_{j_i +1}$ into  $a_{j_i}$ and keep the other elements of this row and all elements of the other rows unchanged. \\
 Let us look at the new situation for this new matrix $\widetilde {\Af}_k$ in $\mathfrak M(N)$ .\\
 \begin{itemize}
 \item 
 The eigenvalue $\lambda_{k-\mf_k}$ is unchanged: $$\tilde \lambda_{k-\mf_k}= \lambda_{k-\mf_k}\,.$$
 \item We have an eigenvalue of multiplicity $\nu_k$ corresponding to 
 $$
  \lambda_{k-\mf_k+1} <  \tilde \lambda_{k-\mf_k+1}=\dots = \tilde \lambda_{k-\mf_k+\nu_k}< \lambda_k \,.$$
 \item We have
 an eigenvalue of multiplicity $\mf_k$ corresponding to $$
\tilde \lambda_{k-\mf_k+\nu_k +1}= \dots = \tilde \lambda_{k+\nu_k}=\lambda_k  \,.$$
 \end{itemize}
 Because $1\leq \nu_k \leq \mf_k$, we have constructed a matrix $\widetilde {\Af}_k$ in $\mathfrak M(N)$  whose $(k+1)$ eigenvalue has multiplicity $\mf_k\,$.

\section{The isotropic harmonic oscillator}
We first consider:
\begin{equation}
\Af_{N}^{har}:=\left (\begin{matrix} 0&1&2&3&\dots\\
 0&1&2&3&\dots\\
 \dots&\dots&\dots& \dots&\dots \\
  0&1&2&3&\dots
  \end{matrix}
  \right.
  \end{equation}
The spectrum of $\Af_{N}^{har}$ is indeed the spectrum of the isotropic harmonic oscillator
\begin{equation}
H_N:= \frac 12 \left(\sum_{j=1}^N (-\frac{\partial^2}{\partial x_j^2} + x_j^2 )\right)-\frac N 2\,.
\end{equation}
Hence:
\begin{equation}
\sigma(\Af_{N}^{har})= \sigma (H_N)\,.
\end{equation} 
The computation of the multiplicity for the eigenvalues of $H_N$ is well known (see for example \cite{Cha} or \cite{Ca}, Lemma 3.7.3). The spectrum is $\mathbb N$ 
 and, for $j\in \mathbb N$,  the corresponding labelling  is  $k \in \left[ \binom{N+j-1}{j-1} +1, \binom{N+j}{j} \right]$ and
  the corresponding multiplicity is $$\mu_N(j)=m(\lambda_k=j ,H_N)=\binom{N+j-1}{N-1}\,.$$
  Note that we have, for $\lambda_k=j$, ($k=k_{min}(j)$ minimal with this property) and $j\geq 1$
  $$
  k-1 = \sum_{\ell=0}^{j-1} \binom{N+\ell -1}{N-1} = \binom{N+j-1}{N}  \,.
  $$
We compare  the asymptotics of $\mu_N(j)$ and $k$.
We have for large $j$
\begin{equation}\label{muasym}
\mu_N(j)=\frac{j^{N-1}}{(N-1)!}\Big(1+\mathcal O(\frac{1}{j})\Big)
\end{equation}
and  
\begin{equation}\label{kasym}
 k -1=\frac{j^N}{N!}\Big(1+\mathcal O(\frac{1}{j})\Big).
\end{equation}
From this we have, as $j\rightarrow +\infty\,$, 
\begin{equation}
 \mu_N(j)\sim \frac{(N!)^{1-1/N} }{(N-1)!}  k_{min}(j)^{1-1/N}\,.
\end{equation}

  The first immediate lower bound  is consequently
  \begin{proposition}
   \begin{equation}
 \mf _k(N) \geq m(k,\Af_{N}^{har})\,.
 \end{equation}
\end{proposition}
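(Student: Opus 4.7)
The plan is essentially a one-line observation from the definition. By Definition~\ref{mm0}, the quantity $\mf_k(N)$ is the supremum of $m(k,\Af)$ over all $\Af \in \mathfrak M(N)$. Therefore the only thing to check is that the matrix $\Af_N^{har}$ introduced just above lies in the family $\mathfrak M(N)$: once that is established, the inequality is immediate since the supremum dominates any particular value.

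First I would verify membership: each row of $\Af_N^{har}$ is the sequence $0,1,2,3,\dots$, which satisfies the strict increase condition $a_i^n < a_{i+1}^n$ required by \eqref{ineq}, and the entries are real-valued, so $\Af_N^{har} \in \mathfrak M(N)$. Next I would simply apply the definition of supremum to conclude
\begin{equation*}
\mf_k(N) \;=\; \sup_{\Af \in \mathfrak M(N)} m(k,\Af) \;\geq\; m(k,\Af_N^{har}),
\end{equation*}
which is the claim.

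There is no genuine obstacle here; the content of the statement lies not in its proof but in the fact that the right-hand side can be computed explicitly from the known multiplicity formula $\mu_N(j) = \binom{N+j-1}{N-1}$ recalled in the paragraph preceding the proposition, together with the labelling $k \in \bigl[\binom{N+j-1}{j-1}+1,\binom{N+j}{j}\bigr]$. Thus the proof proposal reduces to invoking the definition of $\mf_k(N)$ after observing that $\Af_N^{har}$ is an admissible matrix, the value of $m(k,\Af_N^{har})$ being computable via the isotropic harmonic oscillator identification made just above.
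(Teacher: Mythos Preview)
Your proof is correct and matches the paper's own treatment: the proposition is stated as an immediate consequence (``The first immediate lower bound is consequently\dots''), relying on exactly the observation that $\Af_N^{har}\in\mathfrak M(N)$ so that the supremum defining $\mf_k(N)$ dominates $m(k,\Af_N^{har})$.
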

  It was first  natural to  ask if 
 for any $k$ and $N$, we have equality.
This will be   indeed true for $N=2$ but we will give  a counterexample for $N=3$ and $k=4$.
Hence a new conjecture has to be found.\\
 From first computations, we propose the refined following conjecture 
 \begin{conjecture}
 \begin{equation}\label{eq:2.6}
 \mf _k(N) \geq m(k,\Af_{N}^{har})\,.
 \end{equation}
with equality in \eqref{eq:2.6}  for the $k$'s such that $\lambda_{k-1}^{har} < \lambda_k^{har}$.
 \end{conjecture}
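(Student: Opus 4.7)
The conjecture decomposes naturally into two parts. The inequality $\mf_k(N) \geq m(k,\Af_N^{har})$ is immediate from the preceding proposition by taking $\Af = \Af_N^{har}$, so the substantive content is the equality claim at the ``starting'' indices
\[
k_{min}(j) := \binom{N+j-1}{N}+1,
\]
i.e., those $k$ for which $\lambda_{k-1}^{har}<\lambda_k^{har}=j$. My plan is to recast this equality as an extremal shadow problem on the lattice $(\nz^*)^N$ and match it against the harmonic count $\mu_N(j)=\binom{N+j-1}{N-1}$.

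Fix $\Af\in\widehat{\mathfrak M}(N)$ achieving multiplicity $M=m(k,\Af)$ at $\Lambda:=\lambda_k(\Af)$, and introduce
\[
\mathcal{F}=\Bigl\{(j_1,\dots,j_N)\in(\nz^*)^N \,:\, \sum_{n=1}^N a_{j_n}^n = \Lambda\Bigr\}.
\]
Then $|\mathcal{F}|=M$ and $\mathcal{F}$ is an antichain in the componentwise order, since strict monotonicity of each row forces comparable tuples to have distinct sums. The same strict monotonicity shows that every tuple of the down-closure
\[
D(\mathcal{F}):=\{\mathbf{i}\in(\nz^*)^N \,:\, \mathbf{i}\leq \mathbf{j}\text{ for some }\mathbf{j}\in\mathcal{F}\}
\]
not lying in $\mathcal{F}$ has sum strictly less than $\Lambda$, hence contributes an eigenvalue $\lambda_\ell$ with $\ell<k$. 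This yields the basic estimate $|D(\mathcal{F})|-M \leq k-1$. Thus the conjecture at $k=k_{min}(j)$ reduces to the extremal claim that among antichains of size $M$ in $(\nz^*)^N$, the minimum of $|D(\mathcal{F})|$ is attained by the simplex $\mathcal{F}_j=\{\mathbf{j}:\sum_n j_n = j+N\}$, for which $|D(\mathcal{F}_j)|=\binom{N+j}{N}$. The identity $\binom{N+j}{N}-\mu_N(j)=\binom{N+j-1}{N}=k_{min}(j)-1$ then gives $M\leq \mu_N(j)$, with the harmonic matrix $\Af_N^{har}$ itself providing the realization.

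The hard step is the shadow-minimization claim on $(\nz^*)^N$. I would attempt it by a compression argument in the spirit of Kruskal--Katona and of the Clements--Lindstr\"om theorem for products of chains: define shift operators that push members of $\mathcal{F}$ toward the corner $(1,\dots,1)$ while preserving the antichain property, check that each shift weakly decreases $|D(\mathcal{F})|$, and reduce to a canonical colex-type extremal configuration which one identifies with $\mathcal{F}_j$. The delicate points are that $(\nz^*)^N$ is infinite in every direction, so standard finite-lattice compressions must be applied after a preliminary truncation, and that one must rule out hybrid extremal antichains spreading across several level sets of $\sum_n j_n$; a companion check, that the simplex antichain is realizable by a matrix in $\widehat{\mathfrak M}(N)$, is trivially provided by $\Af_N^{har}$ itself. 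The strict form of the bound needed to pin equality to precisely the starting indices $k_{min}(j)$ (rather than to a longer range) should follow from observing that any departure from the pure level-set configuration strictly enlarges the gap $|D(\mathcal{F})|-M$, which is exactly where the condition $\lambda_{k-1}^{har}<\lambda_k^{har}$ on $k$ enters.
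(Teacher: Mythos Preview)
The statement you are trying to prove is labelled a \emph{Conjecture} in the paper and is \emph{not} proven there in general; there is therefore no ``paper's own proof'' to compare against. The paper only establishes the inequality $\mf_k(N)\ge m(k,\Af_N^{har})$ (trivially, by evaluating at $\Af_N^{har}$) and then verifies the equality part in a handful of cases by brute-force case splitting: fully for $N=2$ (Theorem~\ref{N=2}), and for $N=3,4$ only for $k\le 5$ (Proposition~\ref{m(la)1-6} and Section~11). Beyond those small values the equality remains open in the paper.

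Your reduction to an extremal problem is correct and is, in fact, exactly what the paper does for $N=2$: the counting argument in the proof of Theorem~\ref{N=2} is precisely the observation that an antichain of size $m$ in $(\nz^*)^2$ has down-closure of size at least $m(m+1)/2$, with equality on the simplex. So for $N=2$ your approach and the paper's coincide. For $N\ge 3$ your route is genuinely different from the paper's case-by-case analysis, and it is the natural structural approach: the inequality $|D(\mathcal F)|-M\le k-1$ you derive is correct (each tuple in $D(\mathcal F)\setminus\mathcal F$ has sum strictly below $\Lambda$, hence contributes to the count of eigenvalues with index $<k$), and the identity $\binom{N+j}{N}-\mu_N(j)=\binom{N+j-1}{N}=k_{min}(j)-1$ shows that the simplex configuration saturates the bound.

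The gap is exactly where you locate it: the shadow-minimisation claim for general $N$. You sketch a compression strategy ``in the spirit of'' Kruskal--Katona/Clements--Lindstr\"om, but you do not carry it out, and the statement you need (that among antichains in $(\nz^*)^N$ of a given size the down-closure is minimised by an initial segment of a level set, with strict increase of $|D(\mathcal F)|-|\mathcal F|$ past each $\mu_N(j)$) is not an immediate corollary of those theorems as usually stated, since they concern single-rank families rather than arbitrary antichains. It is a Macaulay-type statement that is plausible and likely provable along the lines you indicate, but until that lemma is nailed down your proposal remains a strategy rather than a proof. In particular, ruling out ``hybrid'' extremal antichains spread over several level sets is the heart of the matter, and your one-sentence plan for it is not yet an argument.
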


\section{The case: N=2}
We can solve completely the problem in $2D$.

\begin{theorem}\label{N=2}
 Let $\Af\in \mathfrak M (2)$ 
and denote the associated spectrum by $\sigma(\Af)$.
Then, for $k\in \mathbb N^*$,  
\begin{equation}\label{m2}
 \mf_k(2)= \Big \lfloor\frac{1}{2}\Big(1+\sqrt{8k-7}\Big)\Big\rfloor\,.
\end{equation}
where for $x\in \mathbb R^+$, $\lfloor x \rfloor$ denotes the integer part of $x$.
\end{theorem}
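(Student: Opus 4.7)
The plan is to prove Theorem~\ref{N=2} by matching a lower bound (from the harmonic oscillator) with an upper bound (from a counting argument on the representations of $\lambda_k$).

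For the lower bound, I would simply invoke $\Af_2^{har}\in \mathfrak M(2)$. From the general formula recalled in Section~5, for $N=2$ one has $\mu_2(j)=j+1$, and the value $\lambda_k^{har}=j$ is attained for the indices $k\in[\binom{j+1}{2}+1,\binom{j+2}{2}]$. Setting $m=j+1$, this translates into: whenever $k\in [\binom{m}{2}+1,\binom{m+1}{2}]$, one has $m(k,\Af_2^{har})=m$. A direct algebraic check shows that the range $\binom{m}{2}+1\le k\le \binom{m+1}{2}$ is precisely the set of integers $k$ for which $\lfloor \tfrac12(1+\sqrt{8k-7})\rfloor=m$, so $m(k,\Af_2^{har})$ equals the right-hand side of \eqref{m2}.

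For the upper bound, let $\Af \in \mathfrak M(2)$, set $\lambda=\lambda_k(\Af)$, and assume $m=m(k,\Af)$. List the $m$ distinct representations $\lambda=a_{i_p}^1+a_{j_p}^2$ for $p=1,\dots,m$. Since the two rows are strictly increasing, two representations cannot share a first or second index, so up to relabelling
\[
i_1<i_2<\cdots<i_m\qquad\text{and}\qquad j_1>j_2>\cdots>j_m.
\]
The key observation is that for any $1\le p<q\le m$ the ``cross sum'' $a_{i_p}^1+a_{j_q}^2$ is \emph{strictly} smaller than $\lambda$: indeed
\[
a_{i_p}^1+a_{j_q}^2 = \lambda-(a_{i_q}^1-a_{i_p}^1)<\lambda
\]
because $i_p<i_q$ and the first row is strictly increasing. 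These $\binom{m}{2}$ pairs $(i_p,j_q)$ are pairwise distinct as pairs of indices, hence yield $\binom{m}{2}$ distinct representations of elements of $\sigma(\Af)$ that are strictly less than $\lambda$. Counting with multiplicity, the number of $\lambda_\ell(\Af)$ strictly below $\lambda$ is at most $k-1$, whence
\[
\binom{m}{2}=\frac{m(m-1)}{2}\le k-1.
\]
Solving the quadratic inequality $m^2-m-2(k-1)\le 0$ yields $m\le \tfrac12(1+\sqrt{8k-7})$, and since $m\in\mathbb N^*$ this gives $m\le \lfloor \tfrac12(1+\sqrt{8k-7})\rfloor$.

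Combining the two bounds gives \eqref{m2}. The only subtle step is the monotonicity trick producing the $\binom{m}{2}$ extra representations strictly below $\lambda$; the arithmetic identifying $\lfloor \tfrac12(1+\sqrt{8k-7})\rfloor$ with the unique $m$ satisfying $\binom{m}{2}<k\le \binom{m+1}{2}$ is routine. No genuine obstacle is anticipated: the harmonic oscillator is saturating, as can be read off directly from the block structure of its spectrum.
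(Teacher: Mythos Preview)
Your proof is correct and follows essentially the same route as the paper: the upper bound comes from the $\binom{m}{2}$ ``cross sums'' $a_{i_p}^1+a_{j_q}^2$ with $p<q$ lying strictly below $\lambda$, yielding $\frac{m(m-1)}{2}\le k-1$, and the lower bound is realized by $\Af_2^{har}$. Your write-up is in fact slightly more explicit than the paper's in justifying the algebraic equivalence between $\binom{m}{2}<k\le\binom{m+1}{2}$ and $m=\lfloor\tfrac12(1+\sqrt{8k-7})\rfloor$.
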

{\b Proof}\\
The idea of the proof is simple. Considering $k$, $\Af$ and $\lambda$ a $k$-th eigenvalue of $\Af$, we assume that  $\la$ has multiplicity $m$. We show that this implies that there 
are at least $k-1$ points in $\sigma(\Af)$ strictly smaller than $\la$. 
By assumption, we can write $\la=\alpha_1+\beta_1=\alpha_2+\beta_2=\dots =\alpha_m+\beta_m$ where the 
$\alpha_i\in \af^1, \be_i\in \af^2$ and $\alpha_1 <\alpha_2 <    \dots <\alpha_m$. Note that this implies $\beta_m < \beta_{m-1} < \cdots < \beta_1$. So there are 
$m$ points on the row $\ell(\la)=\{(x,y)\::x+y=\la\}$. Draw the horizontal and the vertical rows through the points 
$\alpha_i,\beta_i, \;\; i=1,2,\dots m$.  For each crossing point $(\al_i,\be_j)$, $\al_i+\be_j\in \sigma(\Af)$.  
 We have to count the number of points $(\al_p,\be_q)$ such that $\al_p+\be_q<\la$ and this number  equals $k-1$.
 Then one easily show that 
\begin{equation}\label{mk}
 \frac{m(m-1)}{2}+1\le k
\end{equation}
 and this implies  the upper bound in \eqref{m2}. \\
More precisely the eigenvalues we have counted correspond to the pairs $(\alpha_1,\beta_j)$ for $j=2,\dots,m$, $(\alpha_2,\beta_\ell)$ for $\ell= 3,\dots, m$,..., $(\alpha_{m-1},\beta_m)$
whose cardinality is $ (m-1) + (m-2)+ \dots +1 = \frac{m(m-1)}{2}$.\\
The lower bound in \eqref{m2}  comes from the direct counting for the special element
\begin{equation}
\label{defAf2}
\Af_2^{har}=\left( \begin{matrix} 0 &1&2&3&\dots \\ 0 &1&2&3&\dots \end{matrix} \right.
\end{equation}
which corresponds to the spectrum of $\Af_2^{har}$.
\finbox
\\
Note that the spectrum is $\mathbb N$ and that the eigenvalue $j$ corresponds to the multiplicity $$m=(j+1)\,,$$  and that the smallest labelling $k$ of such an eigenvalue is
$$k_{min} (j) = 1 + \frac{m(m-1)}{2} = 1 + \frac{j(j+1)}{2}\,.
$$
As an exercise, we can verify that if the multiplicity is maximal for any $k$ then we are in the case of the harmonic oscillator.

\section{The case $N=3$: Main theorem}
Note that the spectrum is $\mathbb N$ and that the eigenvalue $j$ corresponds to the multiplicity 
\begin{equation}\label{multia}
 m= \binom{ 2+j}{j} = \frac{(1+j) (2+j)}{2} \,,
 \end{equation}  
and that the smallest labelling $k$ of such an eigenvalue is
$$k_{min} (j) = 1 +  \frac{j (1+j)(2+j)}{6}\,.
$$
We observe that  
\begin{equation}\label{multib} 
k_{min}(j+1)-k_{min} (j) =  (1+j) (j+2)  /2\,.
\end{equation}
 Defining the function $j(k)$
 as the unique non negative solution of  \break  $k = 1 +  \frac{j (1+j)(2+j)}{6}\,,
$ 
we obtain
 in the case of the harmonic oscillator 
\begin{equation}\label{eq:3.2a}
m(k, \Af_3^{har}) =  \frac 12 ( \lfloor j (k )\rfloor +1)( \lfloor j (k )\rfloor +2)\,.
\end{equation}
Hence the multiplicity jumps at the values $k_{min} (j)$ and the jump is given by
\begin{equation}
m(k_{min}(j+1), \Af_3^{har}) -m(k_{min}(j), \Af_3^{har}) = j+2\,.
\end{equation}

With 
$$
c(\ell) := \ell (\ell +1)/2\,,
$$
we introduce, for  $ j\in \mathbb N$, $\ell=0,\cdots,j$  and  $k \in \mathbb N^*$ in the interval 
$$ I_{j,\ell}:= [k_{min} (j) + c(j+1)-c(j+1-\ell)\,, \, k_{min} (j) + c(j+1)-c(j-\ell))\,$$
the following 
\begin{equation}\label{eq:7.5}
\overline{m} (k,  \Af_3^{har}) = m (k_{min} (j),  \Af_3^{har})  + \ell  \,.
\end{equation}
Note in particular that, for $\ell=j$, $k= k_{min} (j) + c(j+1)-c(1) = k_{min}(j+1)-1$, we have 
\begin{equation}\label{eq:7.6}
\overline{m} (k,  \Af_3^{har}) = m (k_{min}(j),  \Af_3^{har})  + j = m (k_{min}(j+1),  \Af_3^{har})-2 \,.
\end{equation}
and that, for $j\in \mathbb N$, 
$$
\overline{m} (k_{min}(j),  \Af_3^{har}) = m(k_{min}(j),  \Af_3^{har}) \,.
$$
\begin{remark}
From \eqref{eq:7.5} and \eqref{eq:7.6}, we see that
the sequence $\overline{m} (k,  \Af_3^{har})$ is a strictly increasing sequence containing all the positive integers except the sequence defined  for $j\in \mathbb N^*$ by 
$$ m (k_{min}(j+1),  \Af_3^{har})-1 =  \frac{(2+j) (3+j)}{2} -1= (j+1)(j+4)/2\,.$$
\end{remark}

The table below  permits to have a visual expression of the definition
 $$
\begin{array}{ccl}
j& k_{min}(j)& \overline{m}(k) \mbox{ for } k\in  [k_{min}(j),  k_{min}(j+1)) \\
0&1& 1\\
1& 2& 3\,\,3\,4 \\
2& 5& 6\,6\,6\,7\,7\,8 \\
3& 11&10 \,10 \,10\, 10\, 11 \, 11 \, 11\, 12\, 12 \,  13\\
4 & 21&15 \, 15\,  15 \, 15 \, 15 \, 16\, 16\, 16\, 16\, 17\, 17\, 17\, 18\, 18 \,  19\\
5 & 36 &21\, 21\,21\, 21 \, 21\, 21\, 22\, 22\, 22\, 22\, 22\, 23 \, 23 \, 23\,  23\, 24\, 24\, 24\, 25\, 25\, 26\\
6& 57 &28\,28\,28\,28\,28\,28\,28\, 29\,29\,29\,29\,29\,29\, 30\,30\,30\,30\,30\,31\,31\,31\,31\,32\,32\,32\,33\,33\,34
\end{array}
$$
  Motivated by these numerical computations we can state our main theorem, which will be proven in
Section \ref{s7}.
 \begin{theorem}\label{mainth}
 For any $k\in \mathbb N^*$, we have
 \begin{equation}\label{eq:5.7}
 \mf_k (3) \geq \overline{m} (k,  \Af_3^{har}) \,.
 \end{equation}
  \end{theorem}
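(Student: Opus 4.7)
The plan is to construct, for each pair $(j,\ell)$ of integers with $0\le \ell\le j$, an explicit element $\Af(\ell)$ of $\mathfrak M(3)$ whose $k$-th eigenvalue has multiplicity $\binom{j+2}{2}+\ell$ at the index $k^*(j,\ell):=k_{min}(j)+c(j+1)-c(j+1-\ell)$, which is the left endpoint of $I_{j,\ell}$. By Theorem~\ref{thmmon}, this will imply $\mf_k(3)\ge \binom{j+2}{2}+\ell$ for all $k\ge k^*(j,\ell)$, and in particular throughout $I_{j,\ell}$, on which $\overline{m}(\,\cdot\,,\Af_3^{har})\equiv\binom{j+2}{2}+\ell$. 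Since $\mathbb N^*$ is tiled by the intervals $I_{j,\ell}$ with $0\le\ell\le j$, this will give \eqref{eq:5.7}.

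\textbf{The family of matrices.} For $\ell=0$ the claim is realised by $\Af(0):=\Af_3^{har}$ itself, which has multiplicity $\binom{j+2}{2}$ at $\lambda_{k_{min}(j)}$. For $\ell\ge 1$ I would take $\Af(\ell)\in\mathfrak M(3)$ with first two rows $\{0,1,2,\ldots\}$ and third row $\mathbb N\setminus\{\ell\}=\{0,1,\ldots,\ell-1,\ell+1,\ell+2,\ldots\}$ (each row is strictly increasing, so the matrix is admissible). The generating function of $\sigma(\Af(\ell))$ factors as
\[
G_\ell(x)=\frac{1}{(1-x)^2}\Bigl(\frac{1}{1-x}-x^\ell\Bigr)=\frac{1}{(1-x)^3}-\frac{x^\ell}{(1-x)^2},
\]
from which I read that the multiplicity of the integer eigenvalue $m$ in $\sigma(\Af(\ell))$ equals $\binom{m+2}{2}$ for $m<\ell$ and $\binom{m+2}{2}-(m-\ell+1)=\binom{m+1}{2}+\ell$ for $m\ge\ell$. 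Setting $m=j+1$ with $j\ge\ell$ yields precisely the target value $\binom{j+2}{2}+\ell$.

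\textbf{Index matching and the main obstacle.} It then remains to show that in $\Af(\ell)$ the integer level $j+1$ first occurs at position $\lambda_{k^*(j,\ell)}$. Summing by the hockey-stick identity, the cumulative count through level $j$ in $\Af(\ell)$ equals $\binom{j+3}{3}-\binom{j+2-\ell}{2}$, so level $j+1$ begins at $k=1+\binom{j+3}{3}-\binom{j+2-\ell}{2}$. Using $\binom{j+3}{3}=\binom{j+2}{3}+\binom{j+2}{2}$ and $c(j+1-\ell)=\binom{j+2-\ell}{2}$, this rewrites as $k_{min}(j)+c(j+1)-c(j+1-\ell)=k^*(j,\ell)$, as required. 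I expect the main obstacle to be exactly this combinatorial bookkeeping: one has to split the cumulative sum at $m=\ell$, apply the hockey-stick identity on the tail, and verify the binomial identity $\binom{j+2}{2}-\binom{j+2-\ell}{2}=\tfrac{1}{2}\ell(2j-\ell+3)$ in order to reconcile the computed starting index with the piecewise definition of $\overline m$. Once these identities are in hand, Theorem~\ref{thmmon} promotes the pointwise bound at $k^*(j,\ell)$ to the full statement \eqref{eq:5.7}.
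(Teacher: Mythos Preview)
Your proposal is correct and coincides with the paper's proof: both construct the same family of matrices, namely $\Af_3^{har}$ with the single element $\ell$ deleted from the third row, and exhibit the required multiplicity at energy level $j+1$. The paper proceeds ``backward'' (starting from $k=k_{min}(j+1)$ and indexing by successive jumps, deleting $j,\,j-1,\,j-2,\dots$), while you index directly by the deleted value $\ell$ and compute multiplicities and labellings via the generating function $G_\ell$; this is equivalent to the decomposition formula $\mu(j,\Af_3^{har})=\sum_{m=0}^j\mu(j-m,\Af_2^{har})$ that the paper invokes in its Section~12.4 rederivation. Your binomial bookkeeping is correct, and the appeal to Theorem~\ref{thmmon} to extend the bound from the left endpoint $k^*(j,\ell)$ to all of $I_{j,\ell}$ is exactly what the paper does as well.
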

 
 On the basis of our numerical computations and assuming that Conjecture~\ref{Conj1} holds,  we also conjecture that we have actually an equality  in \eqref{eq:5.7}.  We will actually prove the conjecture for $k=1,\dots, 5\,$.

\section{Case $N=3$, small values of $\mf_k (3)$.}
We can consider the maximization over the renormalized matrices
\begin{equation}\label{A3}
\Af =\left(
\begin{matrix}
0&1&a_3&a_4&\dots&a_s&\dots\\
0&b_2&b_3&b_4&\dots&b_s&\dots\\
0&c_2&c_3&c_4&\dots&c_s&\dots
\end{matrix} 
\right.
\end{equation}
with $1 \leq b_2 \leq c_2$ and the previous conditions for each row.\\
 
The main result is:
\begin{proposition}\label{m(la)1-6}
\begin{equation}
\begin{array}{l}
 \mf_1(3)=1\,,\,\mf_2(3)=3\,,  \, \mf_3(3)=3\,, \, \mf_4(3) =  4\,,  \, \mf_5 (3)= 6\,.
 \end{array}
\end{equation}
\end{proposition}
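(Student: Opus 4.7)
The proof splits into five equalities, each of which I treat via a matching lower and upper bound.

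\textbf{Lower bounds.} For $k \in \{1, 2, 3, 5\}$ the isotropic harmonic oscillator $\Af_3^{har}$ of Section~5 already realises the claimed value: from the formula $m(k, \Af_3^{har}) = \binom{j(k)+2}{2}$ with $j(k) = 0, 1, 1, 2$ one gets $1, 3, 3, 6$. For $k = 4$ the harmonic oscillator gives only $3$, so I would exhibit directly the matrix
\[
\Af = \begin{pmatrix} 0 & 1 & 3 & \cdots \\ 0 & 2 & 3 & \cdots \\ 0 & 3 & \cdots & \cdots \end{pmatrix}
\]
(rows continued by strictly increasing sequences bounded below by $4$), for which $\lambda_4 = 3$ admits the four representations $(3, 0, 0)$, $(0, 3, 0)$, $(0, 0, 3)$, $(1, 2, 0)$. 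This matches the $\ell = 1$ instance of Theorem~\ref{mainth}.

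\textbf{Upper bounds.} The inequalities $\mf_1(3) \le 1$ and $\mf_2(3) \le 3$ are immediate: $(0, 0, 0)$ is the unique representation of $\lambda_1 = 0$ after normalisation, and the observation following the reduction proposition gives $m(\lambda_2) \le N = 3$. For $k = 3, 4, 5$ I argue by contradiction, inducting on $k$. Assume $m(k, \Af) \ge M_k + 1$ for some $\Af \in \widehat{\mathfrak M}(3)$ written as in \eqref{A3}. By Theorem~\ref{thmmon} and the already-established values $\mf_j(3)$ for $j < k$, I may assume the block of eigenvalues equal to $\lambda_k(\Af)$ starts precisely at index $k$, i.e.\ $\lambda_{k-1}(\Af) < \lambda_k(\Af)$. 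The crucial structural consequence is that no spectrum value lies in the open interval $(\lambda_{k-1}, \lambda_k)$; therefore every row entry strictly greater than the largest spectrum value at or below $\lambda_{k-1}$ is forced to equal $\lambda_k$ or exceed it. Applied to $a_3, b_2, c_2$ and to induced sums like $1 + b_2$ or $b_2 + c_2$, this confines the entries of each row that are $\le \lambda_k$ to a small finite set contained in $\{0, 1\} \cup \{\lambda_3, \ldots, \lambda_{k-1}\} \cup \{\lambda_k\}$. The triples $(\alpha, \beta, \gamma)$ with $\alpha + \beta + \gamma = \lambda_k$ are then enumerated from this small set and counted. For $k = 3$ and $k = 4$ the enumeration yields at most $3$ and $4$ triples respectively, contradicting the assumed $m(k, \Af) \ge 4$ or $\ge 5$.

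\textbf{Principal obstacle.} The heaviest case is $\mf_5(3) \le 6$. After the reduction to $\lambda_4 < \lambda_5$, I would split according to $m(1) \in \{1, 2, 3\}$ and, when $m(1) < 3$, further according to which of $a_3, b_2, c_2$, or the induced sum $1 + b_2$ realises $\lambda_3$ and $\lambda_4$. In each branch the admissible row entries collapse to a subset of $\{0, 1, \mu_1, \mu_2, \lambda_5\}$ with at most two intermediate values $\mu_i \in (1, \lambda_5)$; the triples summing to $\lambda_5$ admit at most six candidates, the value $6$ being attained only by the harmonic configuration $b_2 = c_2 = 1$, $a_3 = b_3 = c_3 = 2$, $\lambda_5 = 2$. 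The task is essentially bookkeeping over these sub-cases; no new technique beyond the template used for $k = 3, 4$ is required, but the combinatorial branching is what makes the argument tedious rather than conceptually difficult.
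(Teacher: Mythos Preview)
Your proposal is correct and follows essentially the same approach as the paper: explicit examples for the lower bounds (harmonic oscillator plus one ad hoc construction for $k=4$) and exhaustive case analysis for the upper bounds, organised around the reduction to $\lambda_{k-1}(\Af) < \lambda_k(\Af)$ and the fact that every row entry is itself a spectral value. Your $k=4$ lower-bound witness differs from the paper's (which takes $b_2=1$, $a_3=b_3=c_2=2$, i.e.\ the first-jump perturbation $\widetilde{\Af}_3^{har}$ that also emerges from the upper-bound analysis), but both achieve multiplicity~$4$, and the branching you describe for $k=5$ matches the paper's explicit enumeration of subcases.
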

\begin{proof}
\paragraph{Computation of  $\mf_2 (3)$.}~\\
 From the analysis of the harmonic oscillator,  we already know that \break $\mf_2 (3)\geq 3 \,$. It remains to prove the upper bound.\\
 We note that $\lambda_2(\Af) = \min (1, b_2,c_2)$ and that the multiplicity is either $1$ (if $1<b_2$), $2$ if $1=b_2<c_2$, or $3$ if $1=b_2=c_2$. Hence we have established $\mf_2(3)=3$. \\

\paragraph{Computation of  $\mf_3 (3)$.}~\\
From the analysis of the harmonic oscillator, we already know that $\mf_3 (3)\geq 3 $. It remains to prove the upper bound.\\
Let us start with the case of equality in the previous discussion. That is the case when $1=b_2 $. In this case, the third eigenvalue of $\Af$  is $1$ and its multiplicity is the same (hence $\leq$) as the multiplicity of $\lambda_2$ (which is  $\leq 3$).\\
Hence, we can assume $1<b_2 \leq c_2$.\\ The second eigenvalue is simple and equal to $1$ and the third eigenvalue is
$$
\inf ( b_2, a_3)
$$
Analyzing the different cases, the multiplicity of the third eigenvalue is $1$ if $a_3 < b_2$ or  $b_2 < a_3$ and $b_2 < c_2$, $2$ is $b_2=a_3 < c_2$ or $b_2=c_2<a_3$, and $3$ if $b_2=c_2=a_3$. 
 Hence we have shown that the maximal multiplicity is $3$ but this maximal multiplicity is  also satisfied for some $\Af$ which does not correspond to the "harmonic oscillator case":
$$
\Af =\left(
\begin{matrix}
0&1&b_2&a_4&\dots&a_s&\dots\\
0&b_2&b_3&b_4&\dots&b_s&\dots\\
0&b_2&c_3&c_4&\dots&c_s&\dots
\end{matrix} 
\right.
$$
with $b_2>1$.\\
This shows that $\mf_3(3)=3$.\\

 \paragraph{Computation of  $\mf_4 (3)$.}~\\
 From the analysis of the harmonic oscillator, we already know that $\mf_4 (3)\geq 3 $. It remains to analyze the upper bound.\\ The computation of $\lambda_4(\Af) $ can be performed in the same way. 
Having in mind what was done for $\lambda_3(\Af)$ (we now can assume $\lambda_3 <\lambda_4$), it is enough to analyze two remaining cases:
\begin{itemize}
\item $1 < b_2 < a_3$ and $b_2 < c_2\,.$
\item $1=b_2 < c_2\,$.
\end{itemize}
We know indeed that for the other cases $\lambda_3=\lambda_4$ and that the multiplicity is not higher than $3$. In the first case, we have $\lambda_1=0,\lambda_2=1,\lambda_3 =b_2$
 and   $\lambda_4 =\inf (a_3,b_3,c_2)$  which leads to a maximal multiplicity $3$.\\
 In the second case, we have $\lambda_1=0\,,\, \lambda_2=\lambda_3 =1$ and $\lambda_4= \inf (a_3,b_3,c_2,2)$ which leads to a maximal multiplicity $4$ which is obtained when $a_3=b_3=c_2=2\,$. \\
 Hence we have shown that $\mf_4(3)=4$ and this disproves  the conjecture that the maximal multiplicity 
is obtained by the isotropic harmonic oscillator.\\

 \paragraph{Computation of  $\mf_5 (3)$.}~\\
The computation of $\lambda_5(\Af) $ is  more lengthy.  But we have no more to consider the cases where $\lambda_4 (\Af)=\lambda_5 (\Af)$, where the multiplicity is less than $4\,$.\\

Looking at the computation of $\lambda_4$ we have to consider the case when $b_2=1$, and  $\inf (a_3,b_3,c_2,2)$ is attained for one value. \\
Hence, we are led to analyze four subcases:
\begin{itemize}
\item $a_2 =b_2=1$, \, $a_3 < b_3$,\, $a_3 < c_2$, \,$a_3 <2$\,.
\item $a_2 =b_2=1$, $b_3 < a_3$, $b_3 < c_2 $, $b_3 < 2$\,.
\item $a_2 =b_2=1$, $c_2 < a_3$, $c_2 < b_3$, $c_2 < 2$\,.
\item $a_2 =b_2=1$, $2 < a_3$, $2 < b_3$, $2 < c_2$\,.
\end{itemize}
For  the first subcase, we have $\lambda_1=0$, $\lambda_2=\lambda_3=1$,  $\lambda_4=a_3$, $$\lambda_5 =\inf(b_3,c_2,2, a_4)\,.$$  
Hence the multiplicity is not higher than $4$ with equality when $$2=b_3=c_2= a_4\,.$$
For  the second subcase, we have $\lambda_1=0$, $\lambda_2=\lambda_3=1$,  $\lambda_4= b_3$, $$\lambda_5 =\inf(a_3,b_4, c_2,2)\,.$$  
Hence the multiplicity is not higher than $4$ with equality when $2=a_3=b_4=c_2$.\\
For the third subcase,  we have $$\lambda_1=0\,, \,\lambda_2=\lambda_3=1\,,  \, \lambda_4= c_2\,,$$ and  $$\lambda_5 =\inf(a_3,b_3, c_3,2, 1+c_2)\,,$$  
where $(1+c_2)$ has to be counted with multiplicity $2$. Hence the multiplicity is not higher than $6$ with equality when $$2=a_3=b_3=c_3 =1+c_2\,,$$ which corresponds to the isotropic harmonic case.\\
For  the last subcase, we have $\lambda_1=0\,$, $\lambda_2=\lambda_3=1\,$,  $\lambda_4= 2\,$, $$\lambda_5 =\inf(a_3,b_3, c_2)\,.$$  
Hence the multiplicity is not higher than $3$ .\\

We now consider in reference to what was done for $\lambda_4$ the case when
 $1 < b_2 < a_3$ and $b_2 < c_2$. Here we recall that  $\lambda_1=0,\lambda_2=1,\lambda_3 =b_2$
 and $\lambda_4 =\inf (a_3,b_3,c_2)$. Hence, in order to have $\lambda_4 < \lambda_5$, we have to consider three subcases
 \begin{itemize}
 \item $a_3 < b_3$ and $a_3 < c_2$\,,
 \item $b_3 < a_3$ and $b_3 < c_2$\,,
 \item $c_2 < a_3$ and $c_2 < b_3$\,.
 \end{itemize}
 In the first subcase, we have $\lambda_4= a_3$ and $\lambda_5= \inf (b_3,c_2,a_4, 1+b_2)$. Hence the multiplicity is not higher than $4$ .\\
 In the second subcase, we have $\lambda_4= b_3$ and $\lambda_5= \inf (a_3,c_2, b_4, 1+b_2)$. 
 Hence the multiplicity is not higher than $4$ .\\
 In the last subcase, we have $\lambda_4= c_2$ and $\lambda_5= \inf (b_3,c_3,a_3, 1+b_2, 1+c_2)$. 
 Hence the multiplicity is not higher than $5\,$.\\
 
  Hence we have shown that $\mf_5(3)=6\,$.
 \end{proof}
 \begin{remark}
In order to obtain $\mathfrak m_k(3)$ for $k>5$, say,
for instance $k=8$ where we expect $\mathfrak m_8=7$  we would run into very involved combinatorics
and the number of cases to consider grows dramatically with $k\,$.
\end{remark}

 \section{Proof of the main theorem for $N=3$}\label{s7}
 We now start the proof of the main theorem. We observe that for $k\in [k_{min}(j), k_{min}(j) +j+1 )=I_{j,0}$ the claim is nothing else as the claim obtained from the radial harmonic oscillator. The proof will be actually given by starting from $k= k_{min}(j+1)$ and considering decreasing $k$'s.
  \subsection{Proof for the first jump}
Taking $k= k_{min}(j+1)$, with $j>0$, we observe that $$\lambda_{k-1}^{har} < \lambda_k^{har} = \lambda_{k-1}^{har} +1 =j+1   $$
and will prove that
  \begin{equation}\label{eq:2.7}
 \mf _{k-1}(3) \geq m(k,\Af_{3}^{har})-2\,.
 \end{equation}
 We recall that $ m(k,\Af_{3}^{har})-2= \overline {m}(k-1,\Af_{3}^{har})$ for $k = k_{min}(j+1)$.\\
For this, we perturb the maximizer $\Af_3^{har}$ into $\widetilde{\Af}_3^{har}$ by replacing in the third row of  $\Af_3^{har}$ $\lambda_{k-1}^{har}$ by $\lambda_{k-1}^{har} +1$, $\lambda_k^{har}$  by $\lambda_{k}^{har} +1$ and so on (but the other terms will not play a role in the argument). In this way the multiplicity of $\lambda_{k-1}^{har}$ 
 in $\sigma(\widetilde{\Af}_3^{har})$ decreases by one (we loose $0+0+\lambda_{k-1}^{har}$)  and hence the lowest labelling of $\lambda_{k}^{har}$  in $\sigma(\widetilde{\Af}_3^{har})$ becomes $k-1$. 
 Analyzing the possible sums leading to $\lambda^{har}_k$, two sums disappear: $1+0 +\lambda_{k-1}^{har}$ and $0+1 +\lambda_{k-1}^{har}$,   and hence the new multiplicity of $\lambda_{k}^{har}$ in   $\sigma(\widetilde{\Af}_3^{har})$ decreases by $2$.  So we get
 $$
\mf_{k-1}(3)\geq  m (k-1,\widetilde{\Af}_3^{har})= m(k, \Af_3^{har})-2\,,
 $$ 
 hence \eqref{eq:2.7} as announced.

  \subsection{Proof for the second jump}
  We were inspired by the maximizers appearing in some non exhaustive numerics by perturbation on one or two rows of $\Af_k^{har}$. \\ We assume that $k\geq 4$. Hence the multiplicity of $\lambda_{k-1}$ is larger than $3$.
 This time we  perturb $\Af_3^{har}$ into $\widehat{\Af}_3^{har}$ by replacing in the third row of  $\Af_3^{har}$ $\lambda_{k-1}^{har}-1=\lambda_k^{har}-2$ by $\lambda_{k-1}^{har} $  and $  \lambda_{k-1}^{har}$ by  $  \lambda_{k-1}^{har} +1  $ and so on. In other words, we delete in the last row the $(k-2)$-th term and then shift the next terms.\\
 In this way, the multiplicity of  $\lambda_{k-1}^{har}-1$ in $\sigma(\widehat{\Af}_3^{har})$ decreases by one, the multiplicity of $\lambda_{k-1}^{har}$ 
 in $\sigma(\widehat{\Af}_3^{har})$ decreases by two (we loose $(1+0+(\lambda_{k-1}^{har}-1))$ and $(0+1+(\lambda_{k-1}^{har}-1))$  and hence the lowest labelling  of $\lambda_{k}^{har}$  in $\sigma(\widehat{\Af}_3^{har})$ becomes $k-3$. 
  Analyzing the possible sums leading to $\lambda^{har}_k$ for the spectrum of $\widetilde{\Af}_3^{har}$, three sums disappear: $0+2 +(\lambda_{k-1}^{har}-1)$,  $1+1 +(\lambda_{k-1}^{har}-1)$ and
  $2+0 +(\lambda_{k-1}^{har}-1)$,   and hence the new multiplicity of $\lambda_{k}^{har}$ in   $\sigma(\widehat{\Af}_3^{har})$ decreases by $3$. So we get
 \begin{equation}\label{eq:9.2}
\mf_{k-3}(3)\geq  m (k-3,\widehat{\Af}_3^{har})= m(k, \Af_3^{har})-3\,.
 \end{equation}
  Now, if $m(k, \Af_3^{har})-3 \geq 2$, we get $ m (k-3,\widehat{\Af}_3^{har})\geq 2$ which implies, because $k-3$ was the minimal labelling of $\lambda_k^{har}$
  $$
  \mf_{k-2}(3)\geq  m(k, \Af_3^{har})-3\,.
 $$
 One can also directly deduce this last inequality from the monotonicity of the multiplicity with respect to $k$.\\
  Hence we have shown the theorem for $k$ in the interval $$ [k_{min}(j+1) -3, k_{min}(j+1) - 1)= I_{j,j-1}\,.$$
This is coherent with our numerics showing that  the same matrix  can be used for the lower bound of  $\mf_{k-2} (3)$ and  $\mf_{k-3} (3)$.
\subsection{Proof for any jump}
We can continue in the same way assuming that $\lambda_{k-3} =\lambda_{k-2} =\lambda_{k-1}$ (assuming that $3 \leq  m(k-1,\Af_3^{har})$)  and deleting this time  $\lambda_k -3$ on the last row
 and shifting the next terms. The labelling  of $\lambda_k$ decreases by $1 +2 + 3$ and its multiplicity will decrease by $4$. This gives
 $$
 \mf_{k-6} \geq  m(k, \Af_3^{har}) - 4\,.
 $$
 This gives the scheme for the general proof of the theorem. The condition on the multiplicity of $\lambda_{k_{min}(j)}=j$ which appears in the proof
  is indeed satisfied as soon as $j\geq 2\,$. Using the monotonicity, we also obtain
   $$
 \mf_{k-4} \geq \mf_{k-5} \geq \mf_{k-6} \geq  m(k, \Af_3^{har}) - 4\,.
 $$
  Coming back to the definition of $\overline{m} (k, \Af_3^{har})$, this proves the theorem for  $k\in [k_{min}(j+1) -6, k_{min}(j+1) - 3)= I_{j,j-2}\,$.
  \section{About higher dimensions}\label{s10}
  \subsection{$  \mf_2(N) $}
  It is evident that $\mf_1(N)=1$ and looking at the proof of the case $N=3$, we obtain
  \begin{equation}
  \mf_2(N) =N\,.
  \end{equation}
 Let us detail the case $N=4$. From the analysis of the harmonic oscillator,  we already know that $\mf_2 (4)\geq 4 $. It remains to prove the upper bound.\\
 We note that $\lambda_2(\Af) = \min (1, b_2,c_2,d_2)$ and that the multiplicity is either $1$ (if $1<b_2$), $2$ if $1=b_2<c_2$, $3$ if $1=b_2=c_2<d_2$ or $4$ if $1=b_2=c_2=d_2$. Hence we have established $\mf_2(4)=4$. 
 \subsection{Lower bounds continued}\label{ss10.2}
 Because we played in the case $N=3$  with only the last row with the hope to be optimal according to various numerical tries, we can get  lower bounds  in the same way  for any $N$, but we will show that this cannot be optimal already in the case $N=4$.\\
   Let us for example  follow the argument for the first jump. Starting from a $k$ corresponding to a labelling such that  $\lambda_{k-1} < \lambda_k$. Then the same argument gives
  \begin{equation}\label{eq:10.1}
\mf_{k-1}(N)\geq   m(k, \Af_N^{har})-N +1\,.
 \end{equation}
 For the second jump, we get, under the condition that $N$ is smaller than the multiplicity of the eigenvalue of the harmonic oscillator $m(k-1, \Af_N^{har})$ (true if $k>2$), 
 \begin{equation}
\mf_{k-N}(N)\geq   m(k, \Af_N^{har})- \mu(2,N-1)\,,
 \end{equation}
 where $\mu(2,N-1)$ is the multiplicity of the eigenvalue $2$ for $ \Af_{N-1} ^{har}$, i.e. 
 $$
 \mu(2,N-1)= N (N-1)/2\,.
 $$
 For $N=4$, this reads, having in mind for $\Af_4^{har}$ that
 $$ \la_1=0, 1= \la_2=\la_3=\la_4= \la_5 , 2 =\la_6= ...= \la_{15}\,,$$
and hence $$m(\la_1,\Af_4^{har})=1, m(\la_2,\Af^{har}_4)=4, m(\la_6,\Af^{har}_4)=10\,,\, m(\lambda_{16},\Af^{har}_4)=20\,.
$$
Hence at this stage, we have 
$$\begin{array}{l}
\mf_1(4)=1, \mf_2(4) = 4\,, \mf_3(4) = 4,  \mf_4(4) \geq 4, \mf _5(4)\geq 7, \mf_6(4) \geq 10\,,\\
\mf_{12} (4) \geq 14, \mf_{15} (4) \geq 17, \mf_{16}(4)\geq 20\,.
\end{array}
$$
These estimates can be combined with the monotonicity argument.

\section{The case $N=4$, $k=1,\dots,5$}
 \subsection{Computation of $\mf_3(4)$}
 Let us look at $\mf_3(4)$. We use the notation
 $$
\Af =\left(
\begin{matrix}
0&1&a_3&a_4&\dots&a_s&\dots\\
0&b_2&b_3&b_4&\dots&b_s&\dots\\
0&c_2&c_3&c_4&\dots&c_s&\dots\\
0&d_2&d_3&d_4&\dots&c_s&\dots
\end{matrix} 
\right.
$$
and recall that $1 \leq b_2\leq c_2\leq d_2\,$.\\
As observed in the case $N=3$, we have already shown that the multiplicity of $\lambda_3(\Af)$ is $\leq 4$ unless $1 < b_2$.\\
 Hence from now on, we assume that
 $$
 1 < b_2\leq c_2 \leq d_2\,.
 $$
 The second eigenvalue is simple and equal to $1$ and the third eigenvalue is
$$
\inf ( b_2, a_3)\,.
$$
There are three cases to consider $b_2>a_3$, $b_2=a_3$ or $b_2<a_3$.\\
In the first case, the multiplicity is $1$.\\
In the second case, the multiplicity is $4$ if $a_3=b_2=c_2=d_2$, $3$ if $a_3=b_2=c_2<d_2$, $2$ if $a_3=b_2 < c_2$.\\
Finally, we have to look at the case $b_2<a_3$.  The multiplicity is $3$ if $b_2=c_2=d_2$, $2$ if $b_2=c_2<d_2$, $1$ if $b_2 < c_2$.\\
Hence, we have also proved that $\mf_3(4) = 4$.

\begin{remark}\label{rem11.1}
In the second case just above, we note that the multiplicity of $\lambda_4(\Af)$ is $4$ which gives already  the lower bound for $\mf_4(4)\geq 4$.
\end{remark}

\subsection{Computation of $\mf_4(4)$}\label{ss11.2}
In our analysis, we have not to come back to the previously analyzed following cases  where for some $\Af$
\begin{itemize}
\item $m(2,\Af )\ge 3\,$,
\item $\la_2(\Af)<\la_3(\Af) $ and  $m(3,\Af)\ge 2\,$.
\end{itemize}
We have already a lower bound for these cases  (see Remark \ref{rem11.1}). Hence  if we  find a new example with a higher multiplicity than $\mf_3(4)$, we will also get a new lower bound. So we can assume that either
 $$
(Case\, A): \quad \lambda_2(\Af)=\lambda_3(\Af) < \lambda_4(\Af)\,,
 $$
 or
 $$
(Case \,B): \quad  \lambda_2(\Af) < \lambda_3(\Af) < \lambda_4(\Af)\,.
 $$
 {\bf In Case A}, we have $1=b_2<c_2$ and $\lambda_2(\Af)=\lambda_3(\Af)=1$. The fourth eigenvalue is
 $$
 \lambda_4(\Af) = \inf (a_3, b_3, c_2,d_2, 2)\,.
 $$
 This gives an upper bound of the multiplicity by $5$, where $5$ is obtained for
 $$
 a_3=b_3=c_2=d_2=2\,,
 $$
 corresponding to $\lambda_4(\Af)=2$  and $m(4,\Af)=5$.\\
 \begin{remark}\label{rem}
 We note that in this case of equality  $\lambda_3(\Af) < \lambda_4(\Af)$ and not only  $\mf_4(4) \geq 5$ but also $\mf_5(4) \geq 5$ .
 \end{remark}

 This matrix reads
  $$
\Af =\left(
\begin{matrix}
0&1&2&3&\dots&\dots\\
0&1&2&3&\dots&\dots\\
0&2&3&4&\dots&\dots\\
0&2&3&4&\dots& \dots
\end{matrix} 
\right.
$$
 We note that we have modified two rows starting from the harmonic case.
{\bf  Let us now look at Case B.\\}
 So we have $1 < b_2$.\\
 According to the analysis of $\mf_3(4)$, we can distinguish two cases corresponding to $\lambda_3$ simple.
 $$
 (Case \,B1):\quad  1 < b_2 \mbox{ and } a_3 < b_2
 $$
 or
 $$
  (Case \,B2):\quad  1 < b_2, b_2 < c_2  \mbox{ and } b_2 < a_3\,.
 $$
 In Case B1, we have $\lambda_3(\Af) = a_3$ and
 $$
 \lambda_4(\Af)= \inf ( a_4, b_2,c_2,d_2)\,,
 $$
 so the maximal multiplicity is four with in this case $a_4=b_2=c_2=d_2 >a_3$.\\
 In Case B2, we have $\lambda_3(\Af) = b_2$ and
 $$
 \lambda_4(\Af)= \inf (a_3,b_3,c_2,d_2,1+b_2)
 $$
 so the maximal multiplicity is $5$ and obtained for
 $$
 a_3=b_3=c_2=d_2=1+b_2\,.
 $$
 Here we get a continuous family of maximizers, which contains  nevertheless a representative with integer coefficients. Note that for this model we have perturbed all the rows.
  In any case, we have proven that $\mf_4(4)=5$ with two different classes of maximizers.  Moreover, we have shown that the conjecture that the lower bound given for $N=3$ in Theorem \ref{mainth}  could be 
   an upper bound cannot be extended to the case $N=4\,$.\\
 
 \subsection{Computation of $\mf_5(4)$}\label{ss11.3}
 In our analysis,  as observed at the beginning of Subsection \ref{ss11.2}, we have not to come back to the cases when in the analysis of $\lambda_2(\Af)$ of some $\Af$ we have shown that the multiplicity was $\geq 4$, for $\lambda_3(\Af)$
 when we show that the multiplicity was $\geq 3$ (with $\lambda_2(\Af) < \lambda_3(\Af)$) and for  $\lambda_4(\Af)$
 when we show that the multiplicity was $\geq 2$ (with $\lambda_3(\Af) < \lambda_4(\Af)$).
 
 Hence it remains to consider:
 \begin{itemize}
 \item[(A5)] $\lambda_2(\Af)=\lambda_3(\Af)=\lambda_4(\Af) < \lambda_5(\Af)$\,,
 \item[(B5)]  $\lambda_2(\Af)< \lambda_3(\Af)=\lambda_4(\Af) < \lambda_5(\Af)$\,,
 \item[(C5)] $ \lambda_3(\Af) < \lambda_4(\Af) < \lambda_5(\Af)$\,.
 \end{itemize}
 
 In case (A5), we have $1=b_2=c_2<d_2$ and $\lambda_4(\Af)=\,1$.\\
 We immediately get that
 $$
 \lambda_4(\Af)< \lambda_5(\Af) = \inf (  d_2, a_3, b_3,c_3,2 )\,.
 $$
 The maximal multiplicity is when $d_2=a_3=b_3=c_3=2$ and the multiplicity corresponding to this $\Af$  is $m(5,\Af)=7$. This corresponds to just deleting $1$ in the last row.\\~\\
 In case (B5), we have $1< b_2 \leq   c_2$. Here according to the analysis of $\mf_3(4)$, we have two  subcases to consider in order to get $\lambda_3(\Af)$ of multiplicity 2: $$a_3=b_2 < c_2 \mbox{  or } b_2=c_2<\inf (a_3,d_2)\,.$$
In the two subcases, we have  $\lambda_3(\Af)=\lambda_4(\Af)= b_2>1$ and, in Subcase (B5a)
 $$
 \lambda_5(\Af)= \inf (c_2,d_2,a_4,b_3, 1+b_2 )\,,
 $$
 while in Subcase (B5b) we have
  $$
 \lambda_5(\Af)= \inf (d_2,a_3,b_3, 1+b_2 )\,.
 $$
 In Subcase (B5a), the maximal multiplicity is $5$ and in Subcase (B5b), the maximal multiplicity is also $5\,$.\\
 We now treat the case (C5). Here we have 
 \begin{itemize}
 \item[(C5a)] $ \lambda_2(\Af)=\lambda_3(\Af) < \lambda_4(\Af) < \lambda_5(\Af)$\,,
 \item[] or 
 \item[(C5b)] $ \lambda_2(\Af)< \lambda_3(\Af) < \lambda_4(\Af) < \lambda_5(\Af)$\,.
 \end{itemize}
 
 In case (C5a), we have $1=b_2<c_2$. The fourth eigenvalue is
 $$
 \lambda_4(\Af) = \inf (a_3, b_3, c_2,d_2, 2)\,.
 $$
 In order to get $ \lambda_4(\Af)$ simple, the infimum should be attained for only one element between the five elements appearing in the infimum.
 \begin{enumerate}
 \item If $a_3 < \inf (b_3, c_2,d_2, 2)$, we have $\lambda_5(\Af)= \inf (a_4, b_3, c_2,d_2, 2)$ and the multiplicity is at most $5\,$.
 \item If $b_3 < \inf (a_3,  c_2,d_2, 2)$, we have $\lambda_5(\Af)= \inf (a_3, b_4, c_2,d_2, 2)$ and the multiplicity is at most $5\,$.
\item if $c_2< \inf (a_3, b_3,d_2, 2)$, we have $\lambda_5(\Af)= \inf (a_3, b_4, c_3,d_2, 2)$ and the multiplicity is at most $5\,$.
\item If $d_2 <  \inf (a_3, b_3, c_2, 2)$, we have $\lambda_5(\Af)= \inf (a_3, b_3, c_2,d_3, 2)$ and the multiplicity is at most $5\,$.
\item If $2 <  \inf (a_3, b_3, c_2, d_2)$, we have $\lambda_5(\Af)= \inf (a_3, b_3, c_2,d_3)$ and the multiplicity is a most $4\,$.
 \end{enumerate}
 
 In Case (C5b), we have $1 < b_2 $ and come back to the discussion of Case B for $\mf_4(4)$.\\
 $$
 (Case \,B1):\quad  1 < b_2 \mbox{ and } a_3 < b_2\,.
 $$
 We have $\lambda_3(\Af) = a_3$ and
 $$
 \lambda_4(\Af)= \inf ( a_4, b_2)\,.
 $$
 We have to analyze two subcases:
 \begin{itemize}
 \item If $a_4 <b_2$, we have $\lambda_5(\Af)= \inf ( a_5,b_2,c_2,d_2)$ and  the multiplicity is at most $4\,$.
 \item If $b_2 <\inf ( a_4, c_2)$, we have $\lambda_5(\Af)= \inf ( a_4, c_2, b_3, 1+b_2)$ and  the multiplicity is at most $5\,$.
 \end{itemize}
 Our last subcase to consider is
 $$
  (Case \,B2):\quad  1 < b_2, b_2 < c_2  \mbox{ and } b_2 < a_3\,,
 $$
 where we have  $\lambda_3(\Af) = b_2$ and
 $$
 \lambda_4(\Af)= \inf (a_3,b_3,c_2,d_2,1+b_2)=\inf (a_3,b_3,c_2,1+b_2)\,.
 $$
 To have $\lambda_4(\Af)$ of multiplicity $1$, we get four subcases
 \begin{itemize}
 \item If $a_3 < \inf ( b_3,c_2,1+b_2)$, we  have $\lambda_5(\Af)=  \inf (a_4, b_3,c_2,d_2, 1+b_2)$ and  the multiplicity is at most $5\,$.
 \item If $b_3 < \inf (c_2,1+b_2)$, we  have $\lambda_5(\Af)= \inf (a_3,b_4,c_2,d_2,1+b_2)$  and  the multiplicity is at most $5\,$.
 \item If $c_2 < \inf (a_3,b_3,d_2,1+b_2)$, we  have $\lambda_5(\Af)= \inf (a_3,b_3,c_3,d_2,1+b_2)$  and  the multiplicity is at most $5\,$.
 \item If $(1+b_2) < \inf (a_3,b_3,c_2)$,  we  have $\lambda_5(\Af)= \inf (a_3,b_3,c_3,c_2)$  and  the multiplicity is at most $5\,$.
 \end{itemize}
 One remarks that only (A5) gives a $\Af$ (obtained by deleting $1$ in the last row) for which $m(4,\Af)=7$, that  all the other cases have multiplicity $\le 5$
and that multiplicity 6 never occurs.
Finally, this gives $\mf_5(4)=7$.
 \subsection{Conclusion for $N=4$}
 We have proven
 $$\begin{array}{l}
\mf_1(4)=1, \mf_2(4) = 4\,, \mf_3(4) =4,  \mf_4(4)=5, \mf _5(4)= 7, \mf_6(4) \geq 10\,,\\
\mf_7(4)\geq 10,   \mf_8(4)\geq 10, \mf_9(4)\geq 10, \mf_{11}(4)\geq 10, \\
       \mf_{12} (4) \geq 14,    \mf_{13} (4) \geq 14,  \mf_{14} (4) \geq 14, \mf_{15} (4) \geq 17, \mf_{16}(4)\geq 20\,.
\end{array}
$$
On the other hand, numerics give the following lower bounds
$$\begin{array}{l}
 \mf_6(4) \geq 10, 
\mf_7(4)\geq 10,   \mf_8(4)\geq 10, \mf_9(4)\geq 10,  \mf_{11}(4)\geq 12, \\
       \mf_{12} (4) \geq 14,    \mf_{13} (4) \geq 14,  \mf_{14} (4) \geq 14, \mf_{15} (4) \geq 17,\\
        \mf_{16}(4)\geq 20, 
       \mf_{17} (4) \geq 20,  \mf_{18} (4) \geq 20, \mf_{19} (4) \geq 20, \dots, \mf_{22} (4) \geq 20, \\ \mf_{23}(4) \geq 21.
\end{array}
$$
The new case in blue corresponds to the following matrix 
 $$
\Af =\left(
\begin{matrix}
0&1&2&3&\dots&\dots\\
0&1&2&3&\dots&\dots\\
0&2&3&4&\dots&\dots\\
0&1&3&4&\dots& \dots
\end{matrix} 
\right.
$$
 The table below  permits to have a visual expression of the lower bounds $\check m(k)$  given by numerics  or the theory
 $$
\begin{array}{ccl}
j& k_{min}(j)& \check{m}(k) \mbox{ for } k\in  [k_{min}(j),  k_{min}(j+1)) \\
0&1& 1\\
1& 2& 4\,\,4\,{\clb 5}\, 7 \\
2& 6 & 10\,10\,10\,10\,10\,{\clb 12}\, 14\,14\,14\, 17  \\
3& 16 &20 \,20 \,20\, 20\, 20 \, 20 \, 20\, {\clb 21\,21\, 23}\,25\,25\,25\,25\,25\,{\clg 26}\,29\,29\,29\, 32\\
4& 36& 35\,35\,35\,35\,35\,35\,35\,35\,35\,35\,35\,35\,{\clb 37\,37\,39}\,41\,41\,41\,41\,41\,41\,41\,41\,41\,\dots
\end{array}
$$
The multiplicities in blue correspond to matrices with two rows modified starting from $\Af_4^{har}$.\\
 The multiplicity  $26$ in green corresponds to the $31$-th eigenvalue (which equals $4$), obtained  by deleting $2$ and $3$ on the last row of $\Af_4^{har}$:\\
 $$
\Af_4^{har \setminus \{2,3\} )}=\left(
\begin{matrix}
0&1&2&3&\dots&\dots\\
0&1&2&3&\dots&\dots\\
0&1&2&3&\dots&\dots\\
0&1&4&5& \dots& \dots
\end{matrix} 
\right.
$$
Note that the matrix, with two rows modified starting from $\Af_4^{har}$,
  $$
\Af =\left(
\begin{matrix}
0&1&2&3&\dots&\dots\\
0&1&2&3&\dots&\dots\\
0&1&3&4&\dots&\dots\\
0&1&2&4& \dots& \dots
\end{matrix} 
\right.
$$
gives the same multiplicity for the $31$-th eigenvalue.\\ 
 Contrarily to the case $N=3\,$, we do not see an obvious plausible structure for this table and cannot present a reasonable conjecture in the case $N=4\,$. 
This table is to compare with the lower bounds which can be  obtained as for $N=3$  by the arguments of Subsection \ref{ss10.2}, only deleting one integer in the last row of the harmonic example.

 $$
\begin{array}{ccl}
j& k_{min}(j)& \overline{m}(k) \mbox{ for } k\in  [k_{min}(j),  k_{min}(j+1)) \\
0&1& 1\\
1& 2& 4\,\,4\, 4\, 7 \\
2& 6 & 10\,10\,10\,10\,10\,{\clb 10 }\, 14\,14\,14\, 17  \\
3& 16 &20 \,20 \,20\, 20\, 20 \, 20 \, 20\, {\clb 20\,20\, 20}\,25\,25\,25\,25\,25\,{\clb 25}\,29\,29\,29\, 32\\
4& 36& 35\,35\,35\,35\,35\,35\,35\,35\,35\,35\,35\,35\,{\clb 35\,35\,35}\,41\,41\,41\,41\,41\,41\,41\,41\,41\,\dots
\end{array}
$$

\section{New approach (for the same statements)}
\subsection{Decomposition formula}
We now give a computation which could be general for computing the multiplicities and the corresponding energies when only one row is modified.
The first point is to see how we can recover the multiplicities for $\Af_4^{har}$ by expansion along the last row. The formula reads
$$
\mu (j,\Af_4^{har}) = \sum_{\ell=0}^j \mu (j-\ell,\Af_3^{har})\,.
$$
Here $\mu(\lambda,\Af)$  is the multiplicity of the eigenvalue $\lambda$ of $\Af\,$. 
\subsection{Coming back to the last example}
Using the same expansion for $\Af_4^{har \setminus \{2,3\} )}$, we get
\begin{equation}\label{eq:31a}
\mu(j,\Af_4^{har \setminus \{2,3\} )}) = \sum_{\ell \in \{0,\dots,j\} \setminus\{2,3\} }\mu (j-\ell,\Af_3^{har})\,.
\end{equation}
To recover the minimal labelling corresponding to energy $j$, we simply write for $j\geq 1$
\begin{equation}\label{eq:31b}
k_{min} (j) = 1+ \sum_{n=0}^{j-1}   \mu (n, \Af_4^{har \setminus \{2,3\} })\,.
\end{equation}

\paragraph{Explicit computations}~\\
We recall the formulas obtained  for $\Af_3^{har}$ with the corresponding multiplicity $1,3,6, 10, 15$ of the eigenvalues $0,1,2,3,4$ and the minimal labellings $1,2,5, 11,21$.
Applying \eqref{eq:31a}, for $j=0,\dots, 4$, we get  for $\mu(j,\Af_4^{har \setminus \{2,3\} )})$ the sequence
$$1\,,\, 3+1=4\,,\, 6 + 3=9\,,\, 10+6=16\,,\,15+10 +1=26    \,.$$
According to \eqref{eq:31b}, the corresponding minimal labellings are $1,2, 6, 15, 31$. This confirms what we claim above: there exists a matrix $\Af$, whose $(31$)-th 
eigenvalue has multiplicity $26$.\\

\subsection{Coming back to the case $N=4$.}
We note that this approach is quite general and permits easy computations for all the cases considered previously. Let us for example, see what we get, when deleting 
$k-1$ on the last row and looking at energy $j=k$.\\ Hence we consider, the matrix $\Af_4^{har \setminus \{k-1\}}$.\\
It is clear that the minimal labellings  are unchanged for the energies $\leq  k-1$. For the energy $j=k$, the minimal labelling 
 of $j=k$ is given by
 $$
 \begin{array}{ll}
 k_{min} (k) & = 1+ \sum_{n=0}^{k-1}   \mu(n, \Af_4^{har \setminus \{k-1\} })\\
 &  =  1+ \sum_{n=0}^{k-2}   \mu (n, \Af_4^{har}) + \mu(k-1, \Af_4^{har \setminus \{k-1\} })  \\
 & = k_{min}^{har} (k)  -  \mu  (0,\Af_3^{har}   )   \,.
 \end{array}
 $$
 By comparison of the multiplicity formulas, we get
$$
\mu (k, \Af_4^{har \setminus \{k-1\}} )= \mu (k, \Af_4^{har}) + \mu (1,\Af_3^{har})\,.
$$
We recover the first result of Section~\ref{s10} and could give the general description of what  can be obtained by playing on the last row.
\subsection{Coming back to $N=3$}
Applied in the case $N=3$, this idea could  clarify the proof given in Section~\ref{s7}. We can indeed start from the formula
\begin{equation}
\mu (j,\Af_3^{har}) =\sum_{\ell=0}^j \mu (j-\ell, \Af_2^{har})\,.
\end{equation}
For the first jump, if we delete  the eigenvalue $j-1$ (and shift)  in the last row (this corresponds to our choice of $\widetilde{\Af}_3^{har}=\Af_3^{har\setminus\{j-1\}}$, we get  for energy $j$, by comparison
\begin{equation}
\mu (j,\Af_3^{har}) = \mu (j, \widetilde{\Af}_3^{har}) + \mu (1,\Af_2^{har})=  \mu (j, \widetilde{\Af}_3^{har}) +2  \,.
\end{equation}
The labelling of $j$ for $\Af_3^{har}$ is $k_{min} (j)$. It remains to give the corresponding labelling $\tilde k_{min} (j)$ for $\widetilde{\Af}_3^{har}$.
Here we use the formula
\begin{equation}\label{eq:12.5}
k_{min} (j) = 1+ \sum_{n=0}^{j-1}\mu (n,\Af_3^{har})\,.
\end{equation}
Similarly we have 
\begin{equation}\label{eq:12.6}
\tilde k_{min} (j) = 1+ \sum_{n=0}^{j-1}\mu (n,\widetilde{\Af_3}^{har})\,.
\end{equation}
By comparison of the two formulas, we have, for $j\geq 1$, 
$$
k_{min} (j) = \tilde k_{min} (j) + \mu (j-1,\Af_3^{har})- \mu (j-1,\widetilde{\Af_3}^{har})= \tilde k_{min} (j) +\mu(0, \Af_2^{har})   \,.
$$
hence 
\begin{equation}\label{eq:12.7}
k_{min} (j)   =\tilde k_{min} (j)  +1\,.
\end{equation}

Hence we get back \eqref{eq:2.7}.\\
For the second jump, if we delete  the eigenvalue $j-2$ (and shift)  in the last row (this corresponds to our choice of $\widehat {\Af}_3^{har} = \Af_3^{har\setminus\{j-2\}}$), we get by comparison
\begin{equation}\label{eq:12.8}
\mu (j,\Af_3^{har}) = \mu (j, \widehat{\Af}_3^{har}) + \mu (2,\Af_2^{har})=  \mu (j, \widehat{\Af}_3^{har}) +3  \,.
\end{equation}
The labelling of $j$ for $\Af_3^{har}$ is $k_{min} (j)$. It remains to give the corresponding labelling $\hat k_{min} (j)$ for $\widehat{\Af}_3^{har}$.
Here we use  the formulas \eqref{eq:12.5} and 
\begin{equation}\label{eq:12.9}
\hat k_{min} (j) = 1+ \sum_{n=0}^{j-1}\mu (n,\widehat{\Af_3}^{har})\,.
\end{equation}
By comparison of the two formulas, we have, for $j\geq 1$, 
$$
\begin{array}{ll}
k_{min} (j) & = \hat k_{min} (j) + \mu (j-1,\Af_3^{har})- \mu (j-1,\widehat{\Af_3}^{har})\\
& = \tilde k_{min} (j) +\mu(0, \Af_2^{har})+\mu(1, \Af_2^{har} )  \,,
\end{array}
$$
which leads to
\begin{equation}\label{eq:12.10}
  k_{min}(j) =\tilde k_{min} (j)  + 3\,.
\end{equation}
Hence we get back \eqref{eq:9.2}.\\
The general case is easy to treat. For the matrix $  \Af_3^{har\setminus\{j-\ell\}}$ ($\ell=\{1,2,\dots, j-1$), we have for the energy $j$
\begin{equation}
\mu (j,\Af_3^{har}) =\mu(j, \Af_3^{har\setminus\{j-\ell\}} )+ \mu (\ell ,\Af_2^{har})= \mu(j,\Af_3^{har\setminus\{j-\ell\}}) + \ell +1  \,,
\end{equation}
and
\begin{equation}
 k_{min}(j) =  k_{min}(j,   \Af_3^{har\setminus\{j-\ell\}} ) + \sum_{m=0}^{\ell -1} \mu(m, \Af_2^{har})\,.
 \end{equation}

 {\bf Acknowledgements.}\\
 The authors would like to thank Fritz Gesztesy for mentioning to us  the reference \cite{GKZ}.\\

\footnotesize
\bibliographystyle{plain}

\begin{thebibliography}{1}
\bibitem{Ca} P.J. Cameron,
\newblock  Combinatorics: Topics, Techniques, Algorithms.  
\newblock  Cambridge University Press 1994,

\bibitem{Cha} P. Charron.
\newblock A Pleijel-type theorem for the quantum Harmonic oscillator.
\newblock J. Spectr. Theory 8 (2018), no. 2, 715--732. 

\bibitem{CdV} Y. Colin de Verdi\`ere.
\newblock Construction de laplaciens dont une partie finie du spectre est donn\'ee.
\newblock Annales scientifiques de l'E.N.S. 4e s\'erie, tome 20, no 4 (1987), 599--615.


\bibitem{GKZ} F. Gesztesy, W. Karwowski, and Z. Zhao.
\newblock New types of soliton solutions.
\newblock  Bulletin of the American Math. Society, Volume 27, Number 2, October 1992.

\bibitem{Lar} S. Larson.
\newblock Maximizing Riesz means of anisotropic harmonic oscillators.
\newblock ArXiv 2018.

\bibitem{ReSi} M. Reed and B. Simon.
\newblock Methods of Modern Mathematical Physics. Vol. IV.

%\bibitem{TaVu}
%T.Tao, Van H. Vu.
% \newblock Additive Combinatorics, Cambridge University Press 2006

%\bibitem{Cowan}
%Craig Cowan.
%\newblock Optimal Hardy inequalities for general elliptic operators with improvements.
%\newblock Comm. Pure Appl. Math. 9, 109-140, 2010.  
%\bibitem{Hadwiger:1957}
%H. Hadwiger.
%\textit{Vorlesungen \"uber Inhalt, Oberfl\"ache und Isoperimetrie.}\\
%Springer, 1957.
%\bibitem{HHLT} M. Hoffmann-Ostenhof, T. Hoffmann-Ostenhof, A.
%Laptev, J. Tidblom, \textit{Many-particle Hardy inequalities}.\\
%J. Lond. Math. Soc. 77, 99 - 114, 2008. 
%\bibitem{Reznikov}
%A. G. Reznikov, \textit{A strengthened isoperimetric inequality for simplices.} 
%Geometric aspects of functional analysis (1989-90), 90-93, Lecture Notes in Math., 1469, Springer 1991.
%\bibitem{Reznikov1} A. G. Reznikov. \textit{ Geometric aspects of functional analysis.} (Israel, 1992-1994),
%239-244, Oper. Theory Adv. Appl., 77, Birkh\"auser, Basel, 1995. 
%\bibitem{Schneider}
%R. Schneider, \textit{Convex bopdies: the Brunn-Minkowski theory}\\
%Cambridge University Press 1993.
\end{thebibliography}

\end{document}